\pdfoutput=1
\documentclass[11pt,reqno]{amsart}

%%%%%%%%%%%%%%%%%%%%%%%%%%%%%%%%%%%%%%%%%%%%%%%%%%%
%								Packages									         %
%%%%%%%%%%%%%%%%%%%%%%%%%%%%%%%%%%%%%%%%%%%%%%%%%%%

\usepackage[T1]{fontenc}
\usepackage{amsmath}								
\usepackage{amssymb}
\usepackage{amsthm}
\usepackage{amscd}
\usepackage{amsfonts}
\usepackage{mathtools}
\usepackage{mathptmx}
\usepackage[scaled]{helvet}
\usepackage{microtype,comment}
\usepackage[all]{xy}
\usepackage{euler}
\usepackage{extarrows}
\usepackage[colorlinks, citecolor = blue, linkcolor = blue]{hyperref}
\usepackage[dvipsnames]{xcolor}
\usepackage{blkarray}
%%%%%%%%%%%%%%%%%%%%%%%%%%%%%%%%%%%%%%%%%%%%%%%%%%%%%
% tikz packages
%%%%%%%%%%%%%%%%%%%%%%%%%%%%%%%%%%%%%%%%%%%%%%%%%%
\usepackage{tikz}									
\usetikzlibrary{matrix}
\usetikzlibrary{patterns}
\usetikzlibrary{matrix}
\usetikzlibrary{positioning}
\usetikzlibrary{decorations.pathmorphing}
\usetikzlibrary{cd}
\usetikzlibrary{intersections,calc}
\tikzset{dot/.style={circle, fill=black, inner sep=.05cm}}
\usepackage{dsfont}
\usepackage[left=3.5cm,top=3.5cm,right=3.5cm]{geometry}
\usepackage[shortlabels]{enumitem}
%%%%%%%%%%%%%%%%% command to input outside files as tikzpics: really useful when working with mathcha.io
%%%%%%%%%%%%%%%%
\newcommand{%  
     \resizebox{}{!}{\input{}}}[2]{%  
     \resizebox{#1}{!}{\input{#2}}}

%%%%%%%%%%%%%%%%%%%%%%%%%%%%%%%%%%%%%%%%%%%%%%%%%%%
%								Theorems 								         %
%%%%%%%%%%%%%%%%%%%%%%%%%%%%%%%%%%%%%%%%%%%%%%%%%%%

\newtheorem{theorem}{Theorem}[section]
\newtheorem{lemma}[theorem]{Lemma}

\newtheorem{corollary}[theorem]{Corollary}

\theoremstyle{definition}
\newtheorem{definition}[theorem]{Definition}

\newtheorem{convention}[theorem]{Convention}
\newtheorem{claim}[theorem]{Claim}

\newtheorem{remark}[theorem]{Remark}

%%%%%%%%%%%%%%%%%%%%%%%%%%%%%%%%%%%%%%%%%%%%%%%%%%%
% Commenting commands								         %
%%%%%%%%%%%%%%%%%%%%%%%%%%%%%%%%%%%%%%%%%%%%%%%%%%%

\newcommand{\rcolor}[1]{{\color{blue} #1}}

%%%%%%%%%%%%%%%%%%%%%%%%%%%%%%%%%%%%%%%%%%%%%%%%%%%
% Arrows							         %
%%%%%%%%%%%%%%%%%%%%%%%%%%%%%%%%%%%%%%%%%%%%%%%%%%%

%%%%%%%%%%%%%%%%%%%%%%%%%%%%%%%%%%%%%%%%%%%%%%%%%%%
% Blackboard and mathcal letters 							         %
%%%%%%%%%%%%%%%%%%%%%%%%%%%%%%%%%%%%%%%%%%%%%%%%%%%

\newcommand{\R}{\mathds{R}}
\newcommand{\Z}{\mathds{Z}}

\newcommand{\bP}{\mathds{P}}

\newcommand{\calM}{\mathcal{M}}

\newcommand{\Mbar}{\overline{\calM}}

\newcommand\trop{{\mathtt{trop}}}

\DeclareMathOperator{\Pic}{Pic}

\newcommand{\dg}{\mathrm{deg}}

\DeclareMathOperator{\Aut}{\mathrm Aut}

\DeclareMathOperator{\lcm}{lcm}

\DeclareMathOperator{\val}{val}

\newcommand\Mgn[1]{\mathcal M_{#1}}
\newcommand\Mgnbar[1]{\Mbar_{#1}}

\newcommand\Hur[1]{\mathcal H_{#1}}
\newcommand\Adm[1]{ \mathcal{ \overline{H}}_{#1}}

\newcommand\br{\mathrm{br}}
\newcommand\src{\mathrm{src}}

\newcommand{\Ft}{\mathsf{F_t}}
\newcommand{\Fs}{\mathsf{F_s}}

\newcommand{\ft}{\mathsf{f_t}}
\newcommand{\fs}{\mathsf{f_s}}

\newcommand{\gGamma}{\widehat{\Gamma}}
\newcommand{\gT}{\widehat{T}}

\newcommand{\Tev}{{\mathsf{Tev}}}

%%%%%%%%%%%%%%%%%%% Backslash

\newcommand{\mysetminusD}{\hbox{\tikz{\draw[line width=0.6pt,line cap=round] (3pt,0) -- (0,6pt);}}}
\newcommand{\mysetminusT}{\mysetminusD}
\newcommand{\mysetminusS}{\hbox{\tikz{\draw[line width=0.45pt,line cap=round] (2pt,0) -- (0,4pt);}}}
\newcommand{\mysetminusSS}{\hbox{\tikz{\draw[line width=0.4pt,line cap=round] (1.5pt,0) -- (0,3pt);}}}
\newcommand{\mysetminus}{\mathbin{\mathchoice{\mysetminusD}{\mysetminusT}{\mysetminusS}{\mysetminusSS}}}
\renewcommand\setminus\mysetminus
\renewcommand\smallsetminus\mysetminus

%%%%%%%%%%%%%%%%%%%%%%%%%%%%%%%%%%%%%%%%%%%%%%%%%%%
%                                                                          Title                                                                             %
%%%%%%%%%%%%%%%%%%%%%%%%%%%%%%%%%%%%%%%%%%%%%%%%%%%
\title{Tropical Tevelev Degrees}
\author{Renzo Cavalieri,  Erin Dawson}
%\date{May 2024}

\begin{document}

\maketitle
\begin{abstract}
    We define the {\it tropical Tevelev degrees}, $\Tev_g^\trop$,  as the degree of a natural finite morphism between certain tropical moduli spaces, in analogy to the algebraic case.  We  develop an explicit combinatorial construction that computes $\Tev_g^\trop = 2^g$.  We prove that these tropical enumerative invariants agree with their algebraic counterparts,  giving an independent tropical computation of the algebraic degrees $\Tev_g$.
\end{abstract}
\section{Introduction}
%\ecolor{parts of intro: 
%\begin{itemize}
%    \item Tevelev degrees
%    \item Tropical Tevelev degrees
%    \item Correspondence
%    \item motivation for tropical proof 
%    \item $l=0$ result
%    \item table of notation
%\end{itemize}}

\subsection{Statement of results} \label{sec:isr}
This works has two main, synergistic objectives: the first is to introduce a family of tropical enumerative geometric invariants inspired by, and agreeing with a corresponding family of algebraic enumerative invariants. The second is to perform an explicit combinatorial computation of these invariants.

For any non-negative integer $g$, let $d = g+1$ and $n = g+3$. These conditions are chosen so that the dimension of the moduli space  of tropical admissible covers $\Hur{g,d,n}^{\trop}$ (see Definition \ref{admtrop} for details) equals the sum of the dimensions of $\Mgn{g,n}^\trop$ and $\Mgn{0,n}^\trop$. The product of forgetful morphisms $\Fs\times \Ft$, forgetting the cover map but remembering the source and target as $n$-pointed curves is then a finite map, and the {\it tropical Tevelev degree} $\Tev_g^\trop$ is defined to be its degree.
There is an algebraic version of Tevelev degrees which is defined analogously and denoted $\Tev_g$. Our first main result is a correspondence theorem.

\begin{theorem}
\label{thm:correspondence}
For any $g$, we have
\begin{equation}
    \Tev_{g}^\trop = \Tev_{g}.
\end{equation}
\end{theorem}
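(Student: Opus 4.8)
The plan is to exhibit $\Tev_g^\trop$ and $\Tev_g$ as the degree of one and the same finite morphism --- computed once on Berkovich skeleta and once algebraically --- and to transfer the degree between the two pictures along the tropicalization retraction. First I would invoke the comparison theorems identifying the tropical moduli spaces with skeleta of Berkovich analytifications: $\Mgn{g,n}^{\trop}$ and $\Mgn{0,n}^{\trop}$ are the skeleta of $\Mgnbar{g,n}^{\an}$ and $\Mgnbar{0,n}^{\an}$ (Abramovich--Caporaso--Payne), while $\Hur{g,d,n}^{\trop}$ is the skeleton of $\Adm{g,d,n}^{\an}$ (Cavalieri--Markwig--Ranganathan), each carrying a canonical deformation retraction $\mathrm{trop}$ from the analytification. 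Because the forgetful maps $\Fs$ and $\Ft$ are morphisms of toroidal embeddings, they commute with these retractions, so one obtains a commutative square
\[
\begin{array}{ccc}
\Adm{g,d,n}^{\an} & \longrightarrow & \bigl(\Mgnbar{g,n}\times\Mgnbar{0,n}\bigr)^{\an}\\
\big\downarrow & & \big\downarrow\\
\Hur{g,d,n}^{\trop} & \longrightarrow & \Mgn{g,n}^{\trop}\times\Mgn{0,n}^{\trop}
\end{array}
\]
in which both vertical arrows are $\mathrm{trop}$, the top arrow is the analytification of the finite morphism $\Fs\times\Ft$ of degree $\Tev_g$, and the bottom arrow is the tropical $\Fs\times\Ft$ of degree $\Tev_g^\trop$.

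Next I would compute both degrees using a single, sufficiently generic point. Choose $P$ in the interior of a maximal cone of $\Mgn{g,n}^{\trop}\times\Mgn{0,n}^{\trop}$ so that the tropical fibre $(\Fs\times\Ft)^{-1}(P)=\{x_1,\dots,x_r\}$ is finite, each $x_i$ lies in the interior of a maximal cone of $\Hur{g,d,n}^{\trop}$, and $\Fs\times\Ft$ restricts there to an injective integral-linear map of the two lattices of index $m_i$; by the definition of the tropical degree, $\Tev_g^\trop=\sum_i m_i$. After base change to a complete, non-trivially valued field $K$, lift $P$ to a $K$-point $\tilde P$ of $\Mgnbar{g,n}\times\Mgnbar{0,n}$ with $\mathrm{trop}(\tilde P)=P$ and $\tilde P$ generic in the algebraic sense, so that the scheme-theoretic fibre $(\Fs\times\Ft)^{-1}(\tilde P)$ is reduced of length $\Tev_g=\deg(\Fs\times\Ft)$. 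By the commutative square each of these $\Tev_g$ points retracts to one of the $x_i$, giving $\Tev_g=\sum_i n_i$ with $n_i$ the number of points of the fibre over $\tilde P$ retracting to $x_i$. The heart of the matter is the local identity $n_i=m_i$: near $x_i$ all three spaces are toroidal and $\Fs\times\Ft$ is étale-locally the map of toric varieties induced by the lattice inclusion of index $m_i$, and for such a map a generic torus point has exactly $m_i$ preimages. Summing, $\Tev_g=\sum_i n_i=\sum_i m_i=\Tev_g^\trop$.

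The step I expect to be the main obstacle is precisely this last, local one, together with the genericity needed to reach it: I must ensure that a generic $P$ --- and a generic lift $\tilde P$ above it --- avoids every degenerate locus (non-maximal cones, the boundary of $\Hur{g,d,n}^{\trop}$, loci where $\Fs\times\Ft$ fails to be finite or where the algebraic fibre becomes non-reduced, strata where the toroidal structure is not modelled on a toric chart), and that over such $P$ the Hurwitz space and the two spaces of curves really are, locally near the relevant cones, modelled on toric charts, so that the analytic-local degree of $\Fs\times\Ft$ at $x_i$ is the lattice index $m_i$ with no hidden ramification. Pinning this down relies on the explicit combinatorial description of $\Hur{g,d,n}^{\trop}$ and of $\Fs,\Ft$ furnished by the construction of the next sections, plus a dimension count showing that a generic tropical fibre is finite and meets only maximal cones; granting that, the remaining verifications are the standard lattice-index and local-degree computations familiar from tropical correspondence theorems.
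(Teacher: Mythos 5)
Your overall architecture (compare the analytic and combinatorial degrees of the same map along the retraction to the skeleton, and match local degrees at points of a generic tropical fibre) is the same as the paper's, but the proposal contains a genuine gap at its central step. You assert that $\Hur{g,d,n}^{\trop}$ \emph{is} the skeleton of $\Adm{g,d,n}^{\an}$ and that, near a tropical preimage $x_i$, the map $\Fs\times\Ft$ is \'etale-locally a toric map so that the local analytic degree is just the lattice index $m_i$. Neither claim is correct. The theorem of Cavalieri--Markwig--Ranganathan does not identify the skeleton with the tropical moduli space of admissible covers: there is a map $\trop_\Sigma:\Sigma\Hur{g,d,n}^{\an}\to\Hur{g,d,n}^{\trop}$ which is an isomorphism cone-by-cone but globally neither injective nor surjective, and over a maximal cone $\sigma_\Theta$ there lie exactly $w(\Theta)$ cones of the skeleton, where $w(\Theta)$ is the weight \eqref{def:weights} built from local Hurwitz numbers, automorphisms, and lcm factors. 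This is precisely why $\Hur{g,d,n}^{\trop}$ must be treated as a \emph{weighted} cone complex and why the tropical local degree \eqref{eq:locdegforlater} is $\frac{|\Aut(\overline{\Gamma})|}{|\Aut(\phi)|}\cdot\prod_v H_v\cdot|\det M_{\sigma_\Theta}|$ rather than a bare lattice index. With your unweighted identification, the local identity $n_i=m_i$ fails whenever some $w(\Theta)\neq 1$ or the isotropy groups of the stacks differ between source and target; handling the isotropy is exactly what the paper imports from Rabinoff's local degree results, combined with the fact that the relevant stabilizers are the automorphism groups of the tropical objects.

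Two further points need attention. First, the algebraic map you analytify is $\fs\times\ft:\Adm{g,d,n}\to\Mgnbar{g,n}\times\Mgnbar{0,n}$, whose degree is $(4g)!\,\Tev_g$ by \eqref{def:algTev}, not $\Tev_g$; the tropical space in Definition \ref{admtrop} has the $4g$ simple branch legs unmarked, and this $(4g)!$ must be seen to cancel against that choice (the paper absorbs it via Convention \ref{conv:lochurnum} and the weights). Second, the well-definedness of the tropical degree (independence of the generic point $P$) is not free from your set-up alone; in the paper it is deduced from the correspondence itself, so your argument should either prove it or phrase the conclusion as ``local degree above the chosen $P$'' throughout. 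If you replace your unweighted skeleton identification by the weighted statement (so that $\trop_\Sigma$ becomes a degree-one map of weighted cone complexes) and match local analytic degrees to the weighted combinatorial local degrees including automorphism and Hurwitz-number factors, your argument becomes the paper's proof.
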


Computing tropical Tevelev degrees is a combinatorial inverse problem: given a pair $(\overline{\Gamma},\overline{T})$ of general $n$-marked tropical curves of genera $g$ and $0$, one must find all possible tropical covers $\phi:\Gamma\to T$ whose source stabilizes to $\overline{\Gamma}$ and target stabilizes to $T$ when all ends are forgotten except the $n$ marks. While this can be a very complicated task, we choose a 

- pair $(\overline{\Gamma},\overline{T})$\footnote{while the notion of a point in ``special-general'' position seems self-contradictory, it is a commonly used notion in enumerative geometry; in this particular case, what we mean is that the point can be choosen in the relative interior of a maximal dimensional cone in some cone complex which appropriately refines $\Mgn{g,n}^\trop\times\Mgn{0,n}^\trop$.} that allows for a richly combinatorial reconstruction of these inverse images, as well as for the computation of their multiplicities. We obtain our second main result.

\begin{theorem}\label{thm:ttev}
    For any positive integer $g$,
    \[
    \Tev^\trop_g = 2^g.
    \]
\end{theorem}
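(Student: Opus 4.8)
The plan is to compute $\Tev^\trop_g$ directly from its definition as the degree of $\Fs \times \Ft$, by fixing one carefully chosen general pair $(\overline{\Gamma},\overline{T})$ in the target and enumerating, with multiplicity, its preimages under the forgetful map. For the source I would take $\overline{\Gamma}$ to be a chain (caterpillar) of $g$ loops with the $n = g+3$ marked ends distributed so that the combinatorial type is rigid, and for the target $\overline{T}$ a generic trivalent genus-$0$ curve with $n$ ends, say a caterpillar tree with a prescribed ordering of the ends along its spine and generic edge lengths. The first step is to verify that this pair is general in the sense of the footnote: it lies in the relative interior of a maximal-dimensional cone of a common refinement of $\Mgn{g,n}^\trop\times\Mgn{0,n}^\trop$, so that the count of preimages, weighted by local multiplicities, computes the degree.

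The second step is the reconstruction: given such a pair, classify all tropical covers $\phi:\Gamma\to T$ of degree $d=g+1$ with $\Gamma$ of genus $g$ stabilizing to $\overline{\Gamma}$ and $T$ stabilizing to $\overline{T}$. Since the target is genus $0$ it is a tree, and a harmonic morphism onto it of degree $g+1$ is governed by local expansion factors along edges together with the balancing and local Riemann--Hurwitz conditions at vertices. Traversing the spine of $\overline{T}$ from one end to the other, the preimage of each segment is a disjoint union of segments whose expansion factors sum to $g+1$, and the genus of $\Gamma$ is absorbed by the way these strands reconnect at the points over which $\overline{T}$ branches. I expect to show that assembling such a cover amounts to making, at each of the $g$ loops of $\overline{\Gamma}$, an independent binary choice --- essentially which of two competing strands over the relevant portion of the spine carries the extra sheet that closes up the loop --- so that there are exactly $2^g$ combinatorial types of covers mapping to $(\overline{\Gamma},\overline{T})$.

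The third step is the multiplicity computation: for each of these $2^g$ covers one determines the local multiplicity of $\Fs\times\Ft$, which is the index of the induced map of lattices on the corresponding cones, equivalently the absolute value of a determinant built from the dilation factors and the edge-length relations imposed by the cover, combined with the chosen lengths downstairs and the stabilization upstairs. I would argue that, thanks to the genericity of $\overline{T}$ and the rigidity of $\overline{\Gamma}$, after an appropriate ordering of the edges each such matrix is triangular with unit diagonal, hence unimodular, so every cover contributes multiplicity one and the total is $2^g$. The main obstacle I anticipate is precisely the bookkeeping in this last step --- arranging the dilation factors and length constraints so that unimodularity is manifest --- together with establishing completeness of the enumeration in the second step, i.e.\ ruling out ``unexpected'' covers with larger expansion factors, contracted components, or alternative routings of the marked ends. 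Ruling these out should follow from the fact that $d=g+1$ is as small as possible relative to $g$, which forces the cover to be locally as simple as possible over each loop and leaves no room for the degree to be spent elsewhere.
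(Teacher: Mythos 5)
Your outline (pick a general point of $\Mgn{g,n}^\trop\times\Mgn{0,n}^\trop$, enumerate preimages of $\Fs\times\Ft$, compute local multiplicities, total $2^g$) is the same as the paper's, but two of your key steps do not hold up as stated. First, the multiplicity of a preimage is \emph{not} just the lattice index $[\Lambda_{F(\sigma)}:F(\Lambda_\sigma)]$: the cones of $\Hur{g,d,n}^\trop$ carry weights, so the local degree is $\frac{|\Aut(\overline{\Gamma})|}{|\Aut(\phi)|}\cdot\prod_v H_v\cdot|\det M_\sigma|$. Because the tropical Hurwitz space has $4g$ unmarked simply-branched ends, every cover in the fiber has automorphisms swapping pairs of such ends and their preimages, so the automorphism factor is never trivial; in the paper's computation it equals $2^{-g}$ and is cancelled by a determinant equal to $2^g$ (the genus block contributes a $2$ per loop), not by a unimodular matrix. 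Your claim that each matrix is ``triangular with unit diagonal, hence unimodular, so every cover contributes multiplicity one'' is therefore incompatible with the correct local-degree formula, and omitting the weights would give the wrong count.

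Second, the enumeration is not an independent binary choice at each loop, and your proposal is missing the structural input that makes the inverse problem solvable at all. The central difficulty (which the paper flags explicitly) is that edge lengths of $\Gamma$ and $T$ in an admissible cover are coupled, so for $p$ in a maximal cone one must find covers in which no edge of $\overline{\Gamma}$ maps onto edges of $\overline{T}$; this forces a very particular choice of $p$ --- an \emph{unmarked} chain of loops with a separate marked caterpillar tree, and target lengths $L_j$ incomparably larger than the source lengths $x_i$. With that choice the loops are built from two fragments ($U$ and $D$), but the $D$-option is only available when the running degree of the active edge exceeds one, so the choices are constrained lattice paths, and $2^g$ emerges only after pairing these paths with the $n-2$ possible marked-tree fragments, via $\sum_{j}\binom{g}{j}=2^g$ --- not as a product of $g$ independent choices. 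Finally, ``ruling out unexpected covers because $d=g+1$ is as small as possible'' is not an argument: the paper's exclusion step requires a Riemann--Hurwitz/transposition count ($3g$ transpositions in the genus part, $g$ in the marked part), a check that only two local loop types yield independent loop lengths, and an analysis of the admissible cut/join patterns along the active path. As it stands, both the completeness of your enumeration and your multiplicity claim are genuine gaps.
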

%Combining the results of Theorems \ref{thm:correspondence} and \ref{thm:ttev} one obtains a purely combinatorial computation for algebraic Tevelev degrees.
The following immediate corollary shows that this is an instance in which tropical geometry can provide an independent proof for an algebraic statement.
\begin{corollary}
    Theorem \ref{thm:ttev} and Theorem \ref{thm:correspondence} together  provide a tropical proof of \cite[Theorem 1.13]{Tevelev:2020zux}, showing \(
\Tev_g = 2^g.
    \)
    \end{corollary}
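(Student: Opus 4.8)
The plan is to obtain the corollary as a one-step logical consequence of the two theorems already established, with no further geometric input. Concretely, I would write the chain of equalities
\begin{equation}
\Tev_g = \Tev_g^\trop = 2^g,
\end{equation}
where the first equality is exactly the content of the correspondence Theorem \ref{thm:correspondence} and the second is the combinatorial computation of Theorem \ref{thm:ttev}. Since the corollary is stated for the algebraic invariant $\Tev_g$ and asserts it equals $2^g$, this substitution is all that is logically required; there is nothing left to compute.

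The one bookkeeping point I would address is the range of $g$. Theorem \ref{thm:correspondence} is asserted for \emph{any} $g$, whereas Theorem \ref{thm:ttev} is stated only for positive integers $g$. Thus the displayed chain immediately yields $\Tev_g = 2^g$ for all $g \ge 1$. To cover the remaining case $g = 0$, I would either invoke the correspondence $\Tev_0 = \Tev_0^\trop$ together with a direct check that the tropical count in genus $0$ is $1 = 2^0$, or simply note that the $g=0$ statement is already classical; in any case this edge case is elementary and does not affect the main assertion.

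The essential thing to \emph{verify}, rather than to prove, is that this really constitutes an \emph{independent} tropical proof of \cite[Theorem 1.13]{Tevelev:2020zux}, and this is where I would focus the argument's wording. The deduction is only genuinely independent if neither ingredient covertly uses the algebraic value $2^g$: the count in Theorem \ref{thm:ttev} must come entirely from the explicit combinatorial reconstruction of tropical covers and their multiplicities described in the paragraph preceding that theorem, and the correspondence in Theorem \ref{thm:correspondence} must be established by comparison of the tropical and algebraic moduli-theoretic degrees (via the finiteness of $\Fs \times \Ft$ and standard realizability/degeneration arguments) without appeal to the known formula for $\Tev_g$. I would therefore state plainly that, because both inputs are proved on their own terms, composing them reproves $\Tev_g = 2^g$ by tropical means, which is precisely the claim of the corollary. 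The only potential obstacle is a circularity check, and once that is confirmed the corollary follows immediately.
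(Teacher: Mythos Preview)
Your proposal is correct and matches the paper's own treatment: the paper labels this an ``immediate corollary'' and gives no separate proof, since it is just the chain $\Tev_g = \Tev_g^\trop = 2^g$ obtained by combining the two theorems. Your added remarks about the $g=0$ edge case and the circularity check are reasonable elaborations, but not something the paper itself spells out.
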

\subsection{Context and connections}
Motivated by physics,  \cite{Tevelev:2020zux} introduced the {\it scattering amplitude map} $\Lambda: \Pic^{g+1}(C)\to M_{0,n}$, where $C$ is a general curve of genus $g$ with $n=g+3$ marked points. The map $\Lambda$ is shown to be finite, and its degree $2^g$ is an invariant depending only on $g$. In \cite{CPS:Tevdeg}, where the name {\it Tevelev degrees} is coined,  the authors interpret the family of  scattering amplitude maps parameterized by genus $g$ curves as the product $\fs\times \ft$ of source and target forgetful morphisms from a Hurwitz space, see Section \ref{sec:aac} for details. One may then naturally compactify the Hurwitz space via admissible covers and the  moduli spaces of curves to spaces of stable curves, and approach the study of the degree of $\fs\times \ft$ via intersection theory. A third persepctive, from \cite{A}, views $\Lambda$ as the evaluation morphism from the space of $g^2_d$'s on the curve $C$ to the GIT quotient $(\bP^1)^{n}//\ \bP GL(2)$, and approach its study via techniques of Schubert calculus. Thinking of $g^2_d$'s as rational functions from $C$, one can again collate all scattering amplitude morphisms to a global map from a space of maps $\calM_{g,n}(\bP^1,d)\to \Mgn{g,n}\times ((\bP^1)^{n}//\ \bP GL(2))$; one can then compactify via stable maps and use Gromov-Witten theoretic techniques to approach the problem \cite{B}.

These different perspectives give rise to three kinds of natural generalizations. First, for any integer $l$, if we set $d = g+1+l$ and $n = g+3+2l$, the maps $\fs\times\ft$ remain finite, and their degrees are studied in \cite{CPS:Tevdeg}. Next, one can require non-generic ramification orders in the space of admissible covers, this case has been studied in \cite{C}. Finally, via the Gromov-Witten theoretic approach, Tevelev degrees to higher dimensional targets have been studied in \cite{E,D}.
We expect many of these generalizations to lend themselves nicely to a tropical approach, and in fact \cite{D} already employed tropical techniques in the study of some two dimensional targets. The second author is currently pursuing a systematic study of the one dimensional target case. We find it valuable to write up the case of the original Tevelev degrees on its own, to showcase in the most transparent way the concreteness, as well as the interesting combinatorics coming from the tropical approach.

\subsection{Philosophy and strategy of the tropical approach} One of the common mantras of tropical techniques in enumerative geometry is that tropical geometry is a powerful tool to organize the combinatorics of the degeneration formula for curves. In light of our better understanding of the relationship beteween algebraic and tropical geometry brought about by their fitting together into the picture of logarithmic geometry, we can now say that the tropical perspective goes well beyond a mere organization of algebraic geometric information. To illustrate this point, let us observe the morphism $\fs\times\ft:\Adm{g,d,n} \to \Mgnbar{g,n} \times \Mgnbar{0,n}$ whose degree gives the Tevelev degree $\Tev_g$. While the inverse image of a generic point in the interior of $\Mgnbar{g,n} \times \Mgnbar{0,n}$ consists of the correct number of points, we have no algebraic geometric technique to even {\it name} general smooth curves, let alone reconstruct covers with specified source and target. The degeneration formula approach is based on the fact that both source and target of $\fs\times \ft$ are stratified spaces, which hands us privileged choices for points: the zero dimensional strata, parameterizing the most degenerate curves. In an ideal world, the inverse image of a zero dimensional stratum would consist of a collection of zero dimensional strata, perhaps lots of them, perhaps to be counted with some multiplicities - and tropical geometry would serve as a book-keeping device for this combinatorics. In reality, in spaces of admissible covers nodes of source curves  and their images do not  smooth independently, which as a consequence causes the inverse images of zero dimensional strata  to be positive dimensional. In addition to solving a combinatorial problem, one has to then also correct for excess intersection. This is the approach of \cite{CPS:Tevdeg}: however, to avoid to deal with the amount of excess intersection coming from zero dimensional strata, they limit themselves to pulling back low codimension strata and then obtain recursions among auxiliary types  of Tevelev degrees; thus they need to enlarge their scope in order eventually solve the recursions for the original invariants. 

Tropical geometry witnesses this failure of transversality as follows: the tropicalized map $\Fs\times \Ft$ is not a strict map of cone complexes, as smaller dimensional cones are mapped to the relative interior of larger dimensional cones. This also suggest a remedy for the situation: appropriate refinements of the cone complex structures of target and source can make $\Fs\times \Ft$ a strict map; this correspond to appropriate birational modifications of the algebraic spaces in such a way that the morphism $\fs\times \ft$ extens to a map with zero dimensional fibers. There are new zero
 dimensional strata, parameterizing curves and covers with logarithmic structures, and the count of their inverse image is now completely combinatorial (with no excess intersection); this is in essence the idea driving the correspondence theorem. A more extensive introduction to this circle of ideas may be found in D. Ranganathan lecture notes collected in \cite{F}. 

A further advantage of the tropical approach is that the identification of the inverse images and their multiplicities may be done entirely in the realm of combinatorics: the tropical information of the logarithmic curves and covers is sufficient for this computation. At this point, the challenge is to choose a maximal dimensional cone of the refinement of the cone complex $\Mgn{g,n}^\trop\times \Mgn{0,n}^\trop$ corresponding to tropical curves for which it is possible to solve the combinatorial inverse problem mentioned in Section \ref{sec:isr}.

The reason that this is more complicated than one initially expects is that for  tropical admissible covers, lengths of edges of source and cover curves are not independent. But since for a top dimensional cone in $\Mgn{g,n}^\trop\times \Mgn{0,n}^\trop$ all lengths should be deformable independently, we must be looking for somewhat exotic tropical admissible covers where no edge of the stabilization of the source curve maps to one or a collection of edges of the stabilization of the target curve.

The strategy we settled on eventually consists in separating genus part of the cover from the marked ends: $\overline{\Gamma}$ contains an umnmarked chain of loops with independent edges, to which is attached a tree with all the marked points. The lengths of the stabilized target $\overline{T}$ are chosen uncomparably longer than the edges of $\overline{\Gamma}$. These choices cause two structural advantages in searching for covers $\phi:\Gamma \to T$ stabilizing to $(\overline{\Gamma}, \overline{T})$:
\begin{enumerate}
    \item the chain of loop structure, together with a Riemann-Hurwitz count shows that the loops of the cover must be formed with a very small number of  transpositions, and two points of very high ramification order: this restricts considerably the number of options of how loops are formed and allows for the classification in Section \ref{sec:genuspart}.
    \item the marked ends being all part of one tree forces their inverse images to lie on different copies of a small number of fragments of the same tree, and this structure allows for the classification in Section \ref{sec:markings}.
\end{enumerate}
In conclusion, the careful choice of the point $p = (\overline{\Gamma}, \overline{T})$ in a maximal cone of the refinement of $\Mgn{g,n}^\trop\times \Mgn{0,n}^\trop$ allows for a concrete combinatorial reconstruction of the inverse images. After having proven the correspondence theorem, this computation of Tevelev degrees is certainly more direct and less sophisticated than the previous proofs in the literature. We hope that this type of approach might extend our reach to similar enumerative geometric problems involving counts of curves related to intersection problems on moduli spaces with high amounts of excess intersection. 

\subsection{Acknowledgements} We are grateful to Alessio Cela, Maria Gillespie, Hannah Markwig, Rahul Pandharipande, Dhruv Ranganathan, and Kris Shaw for interesting discussions related to the project. We also acknowledge support from NSF grant DMS-2100962.

\section{Background}
\subsection{Algebraic Tevelev degrees}\label{sec:aac}
We assume familiarity with the moduli spaces of curves and their Deligne-Mumford compactifications, see \cite{KV:IQC, HM:moc} for  introductory presentations.

By $\Adm{g,d,n}$ we denote the admissible cover compactification (\cite{HM:ac}) of the Hurwitz space whose points parameterize isomorphism classes of covers $\varphi:C\to \bP^1$ such that:
\begin{itemize}
    \item $C$ is a connected smooth curve of genus $g$;
    \item $\varphi$ is a map of degree $d$;
    \item all ramification points of $\varphi$ are simple and marked;
    \item $n$ unramified points of $C$ are marked.
\end{itemize}
The space of admissible covers admits natural source and branch morphisms:
\begin{align}
    \src: \Adm{g,d,n} \to \Mgnbar{g,2g+2d-2+n} \nonumber\\
    \br: \Adm{g,d,n} \to \Mgnbar{0,2g+2d-2+n}
\end{align}

 Define $\fs:= \pi_R\circ \src$ by postcomposing the source morphism with the forgetful morphism forgetting the $2g+2d-2$ marks corresponding to ramification points; similarly, $\ft:= \pi_B\circ \br$  is obtained composing the branch morphism with the morphism forgetting the marks corresponding to branch points.
Consider the map
\begin{equation}
    \label{eq:forgfinitemap}
    \fs\times\ft: \Adm{g,d,n} \to \Mgnbar{g,n} \times \Mgnbar{0,n}.
\end{equation}
When $d = g+1$ and $n = g+3$, $\fs\times\ft$ is a finite morphism of $5g$-dimensional spaces. We observe that in this case the cardinality of the ramification (or equivalently branch) locus of any cover $\varphi:C\to \bP^1$ is $4g$. In \cite[Section 1.2]{CPS:Tevdeg} the {\it Tevelev degree} is defined as:
\begin{equation}
    \label{def:algTev}
    \Tev_g:= \frac{\deg (\fs\times\ft)}{(4g)!}.
\end{equation}

In \cite{Tevelev:2020zux}, an equivalent definition is given for Tevelev degrees (called in that paper degrees of the scattering amplitude map), and in Theorem 1.13 it is proved that $\Tev_g = 2^g$.

\subsection{Tropical admissible covers} \label{sec:tac}
We assume familiarity with moduli spaces of tropical curves (\cite{MikhalkinModuli,m:survey}) and the tropicalization statement from \cite{ACP},  that identifies the cone complex $\Mgn{g,n}^\trop$ with the Berkovich skeleton of the analytification of $\Mgn{g,n}$ as a dense open set inside its Deligne-Mumford compactification $\Mgnbar{g,n}$. Unless otherwise specified, we assume all ends of a tropical curve to be labeled.

Tropical admissible covers were introduced in \cite{Caporaso}; their moduli spaces and a tropicalization statement were studied in \cite{CMRadmissible}. We recall some of the statements that are useful in the remainder of the paper.

A {\it tropical admissible cover} of a rational tropical curve with labeled ends is a  morphism of tropical curves $\phi: \Gamma \to T$ satisfying the following requirements:
\begin{enumerate}
\item $T$ is a stable tree with labeled ends.
    \item Parameterizing the relevant edges by arclength (with $\phi(0_e) = 0_{\phi(e)}$), the restriction of $\phi$ to an edge $e$ is a linear function
\begin{equation}
    \phi|_e:[0, l_e]\to [0, l_{\phi(e)}],
\end{equation}
where $l_x$  denotes the length of the edge $x$. The slope $m_e = l_{\phi(e)}/l_e$ is required to be a positive integer, and it is also called the {\it expansion factor} or the {\it degree} of the edge $e$.
\item The map $\phi$ is {\it harmonic}, i.e. for any vertex $v\in \Gamma$ and pairs of edges $e_1, e_2\in T$ incident to $\phi(v)$, we have:
\begin{equation}
    \label{eq:localdegree}
    \sum_{\tiny{\begin{array}{c}e\ni v\\ \phi(e) = e_1\end{array}}} m_e = 
       \sum_{\tiny{\begin{array}{c}e\ni v\\ \phi(e) = e_2\end{array}}} m_e
\end{equation}
The quantity in \eqref{eq:localdegree} is a well-defined invariant of the vertex $v$, called the {\it local degree} of $\phi$ at $v$. By harmonicity we have a well-defined notion of {\it degree} of $\phi$, which may be defined equivalently as either the sum of the local degrees of all vertices in the inverse image of a given vertex of $T$, or the sum of the expansion factors of all edges in the inverse image of a given edge of $T$.

\item The {\it local Riemann-Hurwitz condition} is satisfied at every vertex $v$ of $\Gamma$, i.e.
\begin{equation}
    \val_v +2g_v-2= d_v(\val_{\phi(v)}-2 );
\end{equation}
here  $\val$ stands for valence, $g_v$ is the genus of the vertex $v$ and $d_v$ is the local degree of $\phi$ at $v$.
\end{enumerate}

The {\it combinatorial type} $\Theta$ of a tropical admissible cover $\phi:\Gamma \to T $ is the data obtained forgetting all metric information for $\Gamma$ and $T$, but remembering  expansion factors of edges of $\Gamma$.

The set of admissible covers of a given combinatorial type $\Theta$ is naturally parameterized by the cone $\sigma_\Theta = \R_{\ge 0}^{|CE(T)|}$, where $CE(T)$ denotes the set of compact edges of $T$. For $e$ any compact edge of $T$, let $M_e:=\lcm(\{m_{e'}| e'\in \Gamma, \phi(e')=e\})$.
We define an integral structure in $\sigma_\Theta$ by requiring the lengths of all compact edges of $\Gamma$ and $T$ to be integers. We obtain:
\begin{equation}
    \Lambda_\Theta:= \R_{\ge 0}^{|CE(T)|} \cap \bigoplus_{e\in CE(T)}M_e\cdot \Z.
\end{equation}

We now define the discrete data that identifies a moduli space of tropical admissible covers. We call {\it Hurwitz data} the tuple $\mathfrak{h} = (g,d,N, \eta_1, \ldots, \eta_N)$, where $g,d, N$ are non-negative integers and the $\eta_i$'s are partitions of the integer $d$. A combinatorial type $\Theta$  of tropical admissible covers satisfies the Hurwitz data $\mathfrak{h}$ if the genus of $\Gamma$ is equal to $g$, $T$ has $N$ labeled ends, the degree of $\phi$ is $d$ and the collection of expansion factors for the ends of $\Gamma$ above the $i$-th marked end of $T$ agrees with the partition $\eta_i$. We refer to $\eta_i$ as the {\it branching data} for the $i$-th end of $T$.

There are finitely many combinatorial types $\Theta$ satisfying a given Hurwitz data $\mathfrak{h}$, we denote this finite set by $\Theta_\mathfrak{h}$. The moduli space of {\it tropical admissible covers of type $\mathfrak{h}$} is the (generalized) cone complex obtained as the colimit 
\begin{equation}
    \Hur{\mathfrak{h}}^\trop = \lim_{\to} \{\sigma_\Theta\}_{\Theta \in \Theta_{\mathfrak{h}}},
\end{equation}
where the face morphisms are given by automorphisms of tropical covers and edge contractions as described in \cite[Section 3.2.5]{CMRadmissible}.

It is convenient to give the moduli space $\Hur{\mathfrak{h}}^\trop$ the structure of a weighted cone complex, by giving maximal dimensional cones $\sigma_\Theta$ the weight
\begin{equation}\label{def:weights}
    w(\Theta):= \frac{1}{|\Aut(\Theta)|}\cdot \prod_{v\in V(\Gamma)} H_v \cdot\prod_{e\in CE(T)}\frac{\prod_{\phi(e') = e}m_{e'}}{M_e},
\end{equation}
where $H_v$ denotes the {\it local Hurwitz number} associated to the vertex $v$ (\cite[Section 3.2.4]{CMRadmissible}).

\subsection{Degrees of tropical morphisms} In this section we recall some standard facts about the notions of degree of maps of tropical objects. For a more comprehensive introduction to tropical intersection theory see, for example, \cite[Section 6.7]{macstu:tropgeom}.

Let $\Sigma_1, \Sigma_2$ be two generalized, weighted cone complexes obtained as colimits of  collections of smooth cones with integral structures (i.e. they are simplicial and the primitive vectors along the rays generate the integral structure). We assume for simplicity of exposition that there are no self-maps (automorphisms of the cone preserving the integral structure) in the systems: else, one  includes a factor of $1/|\Aut(\sigma)|$ in the weight $w(\sigma)$ of each top dimensional cone $\sigma$ and what follows goes through otherwise unchanged. 

Assume $\Sigma_1, \Sigma_2$ are pure dimensional of equal dimension $m$, and let $F:\Sigma_1\to \Sigma _2$ be a strict morphism of generalized cone complexes; in particular, for any maximal cone $\sigma\in \Sigma_1$, its image is a cone $\sigma'\in \Sigma_2$; the restriction
$F|_{\sigma}: \sigma \to \sigma'$ is a linear function that preserves the integral structures in the sense that $F(\Lambda_\sigma)\subseteq \Lambda_{\sigma'}$.

With all this notation in place, we make the following definitions.

\begin{definition}\label{def:degrees}
    For a morphism $F:\Sigma_1\to \Sigma_2$ as introduced in the previous paragraphs, we have the following notions of degree:
    \begin{description}
        \item[local degree at a point in the source] let $x\in \Sigma_1$ be a point in the relative interior of a maximal cone $\sigma\in \Sigma_1$. We define the {local degree at $x$} to be
        \begin{equation} \label{eq:latind}
            \deg_xF:= 
            \left\{
            \begin{array}{cl}
                0 & \mbox{if $\dim F(\sigma) <\dim \sigma$}, \\
              \frac{w(\sigma)}{w(F(\sigma))}[\Lambda_{F(\sigma)}: F(\Lambda_\sigma)]   &  \mbox{if $\dim F(\sigma) =\dim \sigma$}
            \end{array}
            \right.
        \end{equation}
        \item[local degree above a point in the target]  let $y\in \Sigma_2$ be a point in the relative interior of a maximal cone $\sigma'\in \Sigma_2$. We define the {local degree above $y$} to be
        \begin{equation}
            \deg_yF:= \sum_{x\in F^{-1}(y)} \deg_x F.
        \end{equation}
        \item[global degree] if the local degree above a point in the target is independent of the choice of the point, it gives a well-defined notion of global {\it degree}, i.e.
        \begin{equation}
            \deg F:= \deg_y F, \ \  \ \ \mbox{if for any $y'$, $\deg_{y'} F= \deg_y F$}. 
        \end{equation}
           \end{description}
\end{definition}
There are combinatorial criteria that imply that the degree of a morphism of cone complexes is well-defined. The most explored situation is the case in which $\Sigma_1$ and $\Sigma_2$ are fans, i.e. they are embedded in vector spaces from which they inherit their integral structures. Another technique is to show that $F$ arises as the tropicalization of an algebraic map with a well-defined degree.

We conclude this section with an elementary statement about how to compute the lattice index in \eqref{eq:latind}. Let $M_\sigma$ be the $m\times m$ matrix representing the linear function $F|_{\sigma}$ in the bases given by the primitive integral generators of the rays of $\sigma$ and $F(\sigma)$; then
\begin{equation}
  [\Lambda_{F(\sigma)}: F(\Lambda_\sigma)]   = \left|
\det (M_\sigma)
  \right|.
\end{equation}

\section{Tropical Tevelev degrees and correspondence}

\label{sec:troptd}

We make a definition of tropical Tevelev degrees following  the algebraic one from \cite{CPS:Tevdeg}. We will consider the tropical version of the morphism from \eqref{eq:forgfinitemap}: it is a map of weighted cone complexes with integral structures, for which we have defined a notion of local degree above any general point of the target in Definition \ref{def:degrees}. The correspondence theorem (Theorem \ref{thm:correspondence}) implies that this notion gives a well defined global degree, since that's the case on the algebraic side.

For any non-negative integer $g$, consider the Hurwitz data \begin{equation}\label{eq:hd}\mathfrak{h}(g) = (g, d = g+1, N = 5g+3, \eta_1, \ldots, \eta_{5g+3}),\end{equation} where 
\begin{equation}
    \eta_i = 
    \left\{
    \begin{array}{cl}
        (1, \ldots, 1) &  i\leq g+3\\
        (2, 1, \ldots, 1)  &   g+4 \leq i \leq 5g+3
    \end{array}
    \right.
\end{equation}

\begin{definition} \label{admtrop}
    For any non-negative integer $g$ let $n = g+3$ and 
    define by $\Hur{g,d,n}^\trop$ to be the variant of the moduli space of tropical admissible covers $\Hur{\mathfrak{h}(g)}^\trop$ where: 
    \begin{itemize}
    \item $\mathfrak{h}$ is the Hurwitz data from \eqref{eq:hd};
        \item for $i\leq g+3$,  only one end of $\Gamma$ in the inverse image of the $i$-th end of $T$ is marked; 
        \item for $i>g+3$, the  marked ends of $T$ 
 and of their inverse images in $\Gamma$ are not marked. %\ecolor{Is there a reason why you switch from labeled to marked?}
    \end{itemize}
    \end{definition}
We are violating here the convention we stated earlier about all ends of tropical curves being labeled; the spaces thus obtained are finite (cone stack) quotients of the tropical admissible cover spaces defined in Section \ref{sec:tac}; in practice this will cost us having to pay  attention to some additional automorphisms, but this choice both simplifies the combinatorics we will encounter and makes the definition of tropical Tevelev degrees more natural. In particular, we make the following convention about local Hurwitz numbers.

\begin{convention}\label{conv:lochurnum}
    Consider an admissible cover $\phi:\Gamma\to T$, and suppose a vertex $v\in \Gamma$ is adjacent to a certain number $n$ of unmarked ends mapping to the same end of $T$. Then the automorphism factor $1/n!$ consisting of permuting these ends is incorporated in the local Hurwitz number, rather than in the automorphism factors. See Figure \ref{fig:lochur} for an illustration. 
 \end{convention}

 \begin{figure}[tb]
     \centering
     \resizebox{.5\textwidth}{!}{\tikzset{every picture/.style={line width=0.75pt}} %set default line width to 0.75pt        

\begin{tikzpicture}[x=0.75pt,y=0.75pt,yscale=-1,xscale=1]
%uncomment if require: \path (0,429); %set diagram left start at 0, and has height of 429

%Straight Lines [id:da5777122295599957] 
\draw [line width=1.5]    (154.5,80) -- (175.5,128) ;
%Straight Lines [id:da020226037620063586] 
\draw [line width=1.5]    (175.5,128) -- (173.5,77) ;
%Straight Lines [id:da6732217229634381] 
\draw [line width=1.5]    (175.5,128) -- (193.5,81) ;
%Straight Lines [id:da4942005976164361] 
\draw [line width=1.5]    (130.5,130) -- (218.5,129) ;
%Straight Lines [id:da6651033641014812] 
\draw [color={rgb, 255:red, 208; green, 2; blue, 27 }  ,draw opacity=1 ][line width=1.5]    (175.5,128) -- (209.5,87) ;
%Straight Lines [id:da8663100722756878] 
\draw [line width=1.5]    (396.5,130) -- (489.5,132) ;
%Straight Lines [id:da6653340980201286] 
\draw [line width=1.5]    (443.5,88) -- (443,131) ;
%Straight Lines [id:da17850924451081296] 
\draw [line width=1.5]    (261.5,129) -- (341.5,129.96) ;
\draw [shift={(344.5,130)}, rotate = 180.69] [color={rgb, 255:red, 0; green, 0; blue, 0 }  ][line width=1.5]    (14.21,-4.28) .. controls (9.04,-1.82) and (4.3,-0.39) .. (0,0) .. controls (4.3,0.39) and (9.04,1.82) .. (14.21,4.28)   ;

% Text Node
\draw (140,135.4) node [anchor=north west][inner sep=0.75pt]    {$\textcolor[rgb]{0.04,0.04,0.93}{4}$};
% Text Node
\draw (203,135.4) node [anchor=north west][inner sep=0.75pt]    {$\textcolor[rgb]{0.01,0.01,0.82}{4}$};
% Text Node
\draw (214,64.4) node [anchor=north west][inner sep=0.75pt]    {$\textcolor[rgb]{0.82,0.01,0.11}{1}$};
% Text Node
\draw (294,101.4) node [anchor=north west][inner sep=0.75pt]    {$\phi $};
% Text Node
\draw (170,138.4) node [anchor=north west][inner sep=0.75pt]    {$v$};

\end{tikzpicture}}
     \caption{A local fragment of a tropical cover $\phi:\Gamma \to T$ near a vertex $v$ of local degree $4$. The blue $4$'s denote the expansion factors of the egdes, which are assumed to be compact edges of $\Gamma$. Diagonally, we have ends of $\Gamma$, only one of which is marked (depicted in red). The local Hurwitz number for $v$ is equal to $1$: the triple Hurwitz number $H_0((4),(4),(1,1,1,1))$ with all ends marked is equal to $6$, but we divide by a factor of $6$ corresponding to permuting the black ends. Such factor is incorporated in the local Hurwitz number and is no longer counted as part of the automorphisms of the cover.}
     \label{fig:lochur}
 \end{figure}
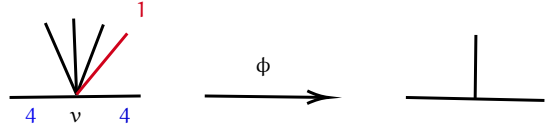

There are natural source and branch morphisms:
\begin{align}
    \src^\trop: \Hur{g,d,n}^\trop \to \Mgn{g,5g+3}^\trop \nonumber\\
    \br^\trop: \Hur{g,d,n}^\trop \to \Mgn{0,5g+3}^\trop
\end{align}

Analogously to the construction in Section \ref{sec:aac}, we postcompose these morphisms with the forgetful morphisms that forget the last $4g$ ends of $T$ and the corresponding marked ends of $\Gamma$ in their inverse image, and obtain forgetful morphisms:
\begin{align}
   \Fs = \pi_{\{i\geq g+4\}}\circ \src^\trop \nonumber\\
   \Ft = \pi_{\{i\geq g+4\}}\circ \br^\trop.
\end{align}

\begin{definition}\label{def:troptev}
For any non-negative
integer $g$, let $d = g+1$ and $n = g+3$.
Consider the morphism of tropical moduli spaces:
\begin{equation}
    \Fs\times \Ft: \Hur{g,d,n}^\trop \to \Mgn{g,n}^\trop \times \Mgn{0,n}^\trop.
\end{equation}
After refining the cone complex structures of source and target, we may assume that $\Fs\times \Ft$ is a strict morphism of generalized cone complexes. We define the {\bf tropical Tevelev degree} to be:
\begin{equation}
    \Tev^\trop_g:={\dg (\Fs\times \Ft)}
\end{equation}
\end{definition}
   
\begin{remark}
Definition \ref{def:troptev} makes sense due to the correspondence theorem (Theorem \ref{thm:correspondence}), which shows that the tropical degree equals the well-defined algebraic degree of the map $\fs\times \ft$.
It  is in fact possible to prove that the  degree of the map $\Fs \times \Ft$ is well-defined directly, combining the perspective and results of \cite{psi-classes, CG:tpsi} with the combinatorial analysis of the map $\trop_\Sigma$ in \eqref{eq:hurwtropdiag}. While this approach would be philosophically more satisfying, it is not strictly necessary for our current purpose and it would take us on a significant technical sidetrack, so we omit it.
\end{remark}

\begin{remark}
 With respect to the algebraic definition, we are missing a denominator of $(4g)!$, since we already chose to unmark the legs corresponding to simple branch points in the tropical Hurwitz space.
\end{remark}

For later convenience, we describe explicitely how to compute the local degree of the map $\Fs\times \Ft$ at a point $x = [\phi:\Gamma\to T]\in \Hur{g,d,n}^\trop$. We refer to \eqref{eq:latind} for the definition of local degree of a map of cone complexes, and  to \eqref{def:weights} for the weights of cones of spaces of tropical admissible covers. For moduli spaces of tropical curves $\Mgn{g,n}^\trop$, maximal cones are weighted by the reciprocal of the size of the automorphism group of the tropical curves parameterized. 
We make the following simplifying assumption, which will be true for the covers considered to compute tropical Tevelev degrees:
\begin{description}
    \item[($\star$)] for each edge $e\in T$, there is at most one edge in $\phi^{-1}(e)$ with expansion factor $>1$.
\end{description}
Assumption $(\star)$ yields the following two consequences:
\begin{enumerate}
    \item The last product in \eqref{def:weights} is  equal to $1$;
    \item  for any compact edge $e$ of $T$, choosing the length of the edge in  $\phi^{-1}(e)$ with highest expansion factor (and choosing any one edge if they all have expansion factor $1$) as a coordinate for $\sigma_\Theta$ gives us a coordinate system whose integral lattice agrees with the integral structure of $\sigma_\Theta$.
 \end{enumerate}
 It follows that the local degree of $\Fs\times \Ft$ at  $x$ is given by:
 \begin{equation}\label{eq:locdegforlater}
     \deg_x (\Fs\times \Ft) =  \frac{|\Aut(\overline{\Gamma})|}{|\Aut(\phi)|}\cdot \prod_{v\in V(\Gamma)} H_v \cdot \left| \det(M_{\sigma_\Theta}) \right|,
 \end{equation}
 where $M_{\sigma_\Theta}$ is the matrix whose rows express the lengths of the compact edges of $\Fs(\Gamma)$ and $\Ft(T)$ as linear functions of the lengths of the edges of $\Gamma$ chosen as discussed in $(2)$ in the previous paragraph.

We are now ready to prove that the tropical enumerative invariants just defined agree with their algebraic counterparts.

\begin{proof}[Proof of Theorem \ref{thm:correspondence}]
    The proof of this theorem is an adaptation of the proof of \cite[Theorem 2]{CMRadmissible}; we describe here the necessary modifications and defer to that paper for some of the details that transfer essentially unchanged.

 Let $\Hur{g,d,n}^{an}$ denote the analytification of the Hurwitz space and by $\Sigma\Hur{g,d,n}^{an}\subset \Hur{g,d,n}^{an}$ the Berkovich skeleton obtained by compactifying the Hurwitz space by admissible covers.
 We have a commutative diagram
 \begin{equation}\label{eq:hurwtropdiag}
     \xymatrix{ 
\Hur{g,d,n}^{an}\ar[r]^{p} \ar@/^2pc/[rr]^{\trop} & \Sigma\Hur{g,d,n}^{an}\ar[r]^{\trop_\Sigma} & \Hur{g,d,n}^\trop},
 \end{equation}
where $p$ is the retraction to the skleleton and the map $\trop_\Sigma$ is a strict morphism of cone complexes which restricts to an isomorphism onto its image for every individual cone, while being globally neither injective nor surjective. Given a maximal dimensional cone $\sigma_\Theta\subset \Hur{g,d,n}^\trop$ corresponding to a combinatrial type $\Theta$ of tropical admissible covers, in \cite[Section 4.2.2, Theorem 2]{CMRadmissible} it is shown that there are exactly $w(\Theta)$ cones in $\Sigma\Hur{g,d,n}^{an}$ mapping isomorphically onto $\sigma_\Theta$. Thus making $\Hur{g,d,n}^\trop$ into a weighted cone complex by giving weight $w(\Theta)$ to each maximal cone $\sigma_\Theta$ makes $\trop_\Sigma$ into a map of weighted cone complexes of degree $1$.
Now consider the diagram:
\begin{equation}
\xymatrix{
\Hur{g,d,n}^{an} \ar[rr]^{(f_t\times f_s)^{an}} \ar[d]_p  & & \Mgn{g,n}^{an} \times \Mgn{0,n}^{an} \ar[d]\\
\Sigma\Hur{g,d,n}^{an} \ar[rr]^{f_t\times f_s\vert_{\Sigma\Hur{}}} \ar[d]_{\trop_\Sigma} & & \Sigma\Mgn{g,n}^{an} \times \Sigma\Mgn{0,n}^{an} \ar@{}[d]|*[@]{=}\\
\Hur{g,d,n}^{an} \ar[rr]^{\Ft\times\Fs}& & \Mgn{g,n}^\trop \times \Mgn{0,n}^\trop.
}
\end{equation}
By the algebraic definition of Tevelev degrees, the map $(f_t\times f_s)^{an}$ is a map of analytic spaces of degree $(4g)! \Tev_g$.
The horizontal map in the middle has a somewhat dual nature: it is naturally the restriction the the skeleton of the previous map, giving rise to an analytic map of the same degree; but it is also a map of cone complexes whose combinatorial degree may be computed combinatorially as in Definition \ref{def:degrees}. After possibly refining the cone complex structures of source and target we can assume the map to be strict. Let $y = (S,T)$ be a general point in $\Sigma\Mgn{g,n}^{an} \times \Sigma\Mgn{0,n}^{an}$, belonging to a maximal cone $\tau_{\overline\Theta}$, and $x = \Gamma \to B$ a point in its inverse image in $\Sigma\Hur{g,d,n}^{an}$; by the assumption of strictness, $x$ is in the interior  of some maximal cone $\sigma_\Theta$. Combining \cite[Section 6]{Rab} with the fact that both points have isotropy given by the automorphism groups of the tropical objects ($(S,T)$ and $\Gamma\to B$), we obtain that the local analytic degree of $f_t\times f_s\vert_{\Sigma\Hur{}}$ at $x$ is given by 
\[
\frac{|\Aut(\overline\Theta)|}{|\Aut(\Theta)|} [\Lambda_{\overline \Theta}: Im(\Lambda_\Theta)],
\]
i.e. it agrees with the combinatorial local degree. Finally, since we have given weights to the cones of $\Hur{g,d,n}^\trop$ to make $\trop_\Sigma$ a map of degree one, we conclude that the combinatorial degree of $\Ft\times\Fs$ equals the analytic degree of $(f_t\times f_s)^{an}$, which immediately implies $\Tev^\trop_g = \Tev_g$.

\end{proof}

\section{Computation of Tropical Tevelev degrees: proof of Theorem \ref{thm:ttev}}

In this section we exhibit a combinatorial computation for tropical Tevelev degrees.
We choose a point $p = (\overline{\Gamma}, \overline{T})$ in the interior of a maximal cone of the refinement of $\Mgn{g,n}^\trop\times \Mgn{0,n}^\trop$ induced by the map $\Fs\times \Ft$. The pair of tropical curves parameterized by $p$ are depicted in Figure \ref{fig:image}. We show that $(\Fs\times\Ft)^{-1}(p)$ consists of $2^g$ points each of multiplicity one. %This shape allows us to build recursively by adding a genus and a marked point to the right of all other loops to the curve in $\Mgn{g,n}^{\trop}$ and a marked point to the end of tree in $\Mgn{0,n}^{\trop}$. 
We start in Section \ref{sec:examples} by computing the tropical Tevelev degrees for three examples in low genera. In Section \ref{sec:solutions} we show the construction of $2^g$ points of multiplicity one for any $g$. Finally, we end in Section \ref{sec:exclude} by ruling out any other points as preimages of the chosen point $p$. 

\begin{figure}[tb]
    \centering
    \tikzset{every picture/.style={line width=0.75pt}} %set default line width to 0.75pt        

\begin{tikzpicture}[x=0.65pt,y=0.65pt,yscale=-1,xscale=1]
%uncomment if require: \path (0,300); %set diagram left start at 0, and has height of 300

%Shape: Ellipse [id:dp2724341284393348] 
\draw   (23,130) .. controls (23,121.72) and (33.52,115) .. (46.5,115) .. controls (59.48,115) and (70,121.72) .. (70,130) .. controls (70,138.28) and (59.48,145) .. (46.5,145) .. controls (33.52,145) and (23,138.28) .. (23,130) -- cycle ;
%Straight Lines [id:da26076748295609564] 
\draw    (70,130) -- (105,130) ;
%Shape: Arc [id:dp453244217025185] 
\draw  [draw opacity=0] (105,130) .. controls (105.94,125.43) and (114.27,121.85) .. (124.41,121.85) .. controls (134.48,121.84) and (142.78,125.36) .. (143.81,129.89) -- (124.41,130.85) -- cycle ; \draw   (105,130) .. controls (105.94,125.43) and (114.27,121.85) .. (124.41,121.85) .. controls (134.48,121.84) and (142.78,125.36) .. (143.81,129.89) ;  
%Shape: Arc [id:dp44300837627793] 
\draw  [draw opacity=0] (143.81,129.89) .. controls (143.36,148.42) and (135.25,163.16) .. (125.12,163.28) .. controls (114.54,163.42) and (105.77,147.59) .. (105.53,127.94) .. controls (105.53,127.65) and (105.52,127.37) .. (105.52,127.09) -- (124.67,127.7) -- cycle ; \draw   (143.81,129.89) .. controls (143.36,148.42) and (135.25,163.16) .. (125.12,163.28) .. controls (114.54,163.42) and (105.77,147.59) .. (105.53,127.94) .. controls (105.53,127.65) and (105.52,127.37) .. (105.52,127.09) ;  
%Straight Lines [id:da2665856784318821] 
\draw    (143.81,129.89) -- (175,130) ;
%Straight Lines [id:da4183568713512752] 
\draw    (221,130) -- (286,130) ;
%Straight Lines [id:da7008280210203625] 
\draw [color={rgb, 255:red, 208; green, 2; blue, 27 }  ,draw opacity=1 ]   (253.67,130.67) -- (253.67,109.67) ;
%Straight Lines [id:da004931382957857222] 
\draw [color={rgb, 255:red, 208; green, 2; blue, 27 }  ,draw opacity=1 ]   (286,130) -- (299,110) ;
%Straight Lines [id:da5421673912203173] 
\draw [color={rgb, 255:red, 208; green, 2; blue, 27 }  ,draw opacity=1 ]   (286,130) -- (299,149) ;
%Straight Lines [id:da9002618227723953] 
\draw    (366,131) -- (463,131) ;
%Straight Lines [id:da5386615497177957] 
\draw [color={rgb, 255:red, 208; green, 2; blue, 27 }  ,draw opacity=1 ]   (366,131) -- (348,112) ;
%Straight Lines [id:da48197378870070895] 
\draw [color={rgb, 255:red, 208; green, 2; blue, 27 }  ,draw opacity=1 ]   (366,131) -- (347,151) ;
%Straight Lines [id:da15445926855901215] 
\draw [color={rgb, 255:red, 208; green, 2; blue, 27 }  ,draw opacity=1 ]   (414.5,131) -- (414.5,108) ;
%Straight Lines [id:da35722225834876575] 
\draw    (524,131) -- (609,131) ;
%Straight Lines [id:da4605814337400046] 
\draw [color={rgb, 255:red, 208; green, 2; blue, 27 }  ,draw opacity=1 ]   (566.5,131) -- (566,108) ;
%Straight Lines [id:da19839877092631675] 
\draw [color={rgb, 255:red, 208; green, 2; blue, 27 }  ,draw opacity=1 ]   (609,131) -- (627,111) ;
%Straight Lines [id:da3300801582856997] 
\draw [color={rgb, 255:red, 208; green, 2; blue, 27 }  ,draw opacity=1 ]   (609,131) -- (626,151) ;

% Text Node
\draw (38.67,98.73) node [anchor=north west][inner sep=0.75pt]  [font=\small]  {$x_{1}$};
% Text Node
\draw (82,111.4) node [anchor=north west][inner sep=0.75pt]  [font=\small]  {$x_{2}$};
% Text Node
\draw (118,104.73) node [anchor=north west][inner sep=0.75pt]  [font=\small]  {$x_{3}$};
% Text Node
\draw (118,165.07) node [anchor=north west][inner sep=0.75pt]  [font=\small]  {$x_{4}$};
% Text Node
\draw (299.86,147.22) node [anchor=north west][inner sep=0.75pt]  [font=\footnotesize,color={rgb, 255:red, 208; green, 2; blue, 27 }  ,opacity=1 ,rotate=-358.45]  {$1$};
% Text Node
\draw (299.86,96.16) node [anchor=north west][inner sep=0.75pt]  [font=\footnotesize,color={rgb, 255:red, 208; green, 2; blue, 27 }  ,opacity=1 ,rotate=-359.63]  {$2$};
% Text Node
\draw (238.68,94.69) node [anchor=north west][inner sep=0.75pt]  [font=\footnotesize,color={rgb, 255:red, 208; green, 2; blue, 27 }  ,opacity=1 ,rotate=-0.17]  {$3$};
% Text Node
\draw (628,154.4) node [anchor=north west][inner sep=0.75pt]  [font=\footnotesize,color={rgb, 255:red, 208; green, 2; blue, 27 }  ,opacity=1 ]  {$1$};
% Text Node
\draw (630.33,93.73) node [anchor=north west][inner sep=0.75pt]  [font=\footnotesize,color={rgb, 255:red, 208; green, 2; blue, 27 }  ,opacity=1 ]  {$2$};
% Text Node
\draw (560,92.73) node [anchor=north west][inner sep=0.75pt]  [font=\footnotesize,color={rgb, 255:red, 208; green, 2; blue, 27 }  ,opacity=1 ]  {$3$};
% Text Node
\draw (334.28,150.79) node [anchor=north west][inner sep=0.75pt]  [font=\footnotesize,color={rgb, 255:red, 208; green, 2; blue, 27 }  ,opacity=1 ,rotate=-359.4]  {$n$};
% Text Node
\draw (332.08,93.85) node [anchor=north west][inner sep=0.75pt]  [font=\footnotesize,color={rgb, 255:red, 208; green, 2; blue, 27 }  ,opacity=1 ,rotate=-0.85]  {$n-1$};
% Text Node
\draw (399.67,94.05) node [anchor=north west][inner sep=0.75pt]  [font=\footnotesize,color={rgb, 255:red, 208; green, 2; blue, 27 }  ,opacity=1 ,rotate=-0.08]  {$n-2$};
% Text Node
\draw (381.33,114.73) node [anchor=north west][inner sep=0.75pt]  [font=\footnotesize]  {$L_{1}$};
% Text Node
\draw (428,115.4) node [anchor=north west][inner sep=0.75pt]  [font=\footnotesize]  {$L_{2}$};
% Text Node
\draw (574,115.4) node [anchor=north west][inner sep=0.75pt]  [font=\footnotesize]  {$L_{g}$};
% Text Node
\draw (181,126.4) node [anchor=north west][inner sep=0.75pt]  [font=\huge]  {$\dotsc $};
% Text Node
\draw (476,127.4) node [anchor=north west][inner sep=0.75pt]  [font=\huge]  {$\dotsc $};
% Text Node
\draw (262,111.73) node [anchor=north west][inner sep=0.75pt]  [font=\small]  {$x_{4g}$};

%%%%%%%%
%%Big labels above
%%%%%%

% Text Node
\draw (160,74.73) node [anchor=north west][inner sep=0.75pt]  [font=\Large]  {$\overline{\Gamma}$};
\draw (460,74.73) node [anchor=north west][inner sep=0.75pt]  [font=\Large]  {$\overline{T}$};

\end{tikzpicture}
    \caption{The graphs $\overline{\Gamma}, \overline{T}$ defining  the chosen point $p$ of $\Mgn{g,n}^\trop\times \Mgn{0,n}^\trop$. The graph $\overline{\Gamma}$, from left to right, consists of a chain of loops, followed by a caterpillar trivalent tree with the marked points in descending order. The $x_i$'s and $L_j$'s label  edge lengths, and the legs in red correspond to the marked points.  We require $x_i<<x_j<<  \ldots <<L_k<<L_l$  for $i<j$ and $k<l$, to ensure that $p$ is in the interior of a maximal cone of the refinement of $\Mgn{g,n}^\trop\times \Mgn{0,n}^\trop$ induced by $\Fs\times \Ft$.}
    \label{fig:image}
\end{figure}
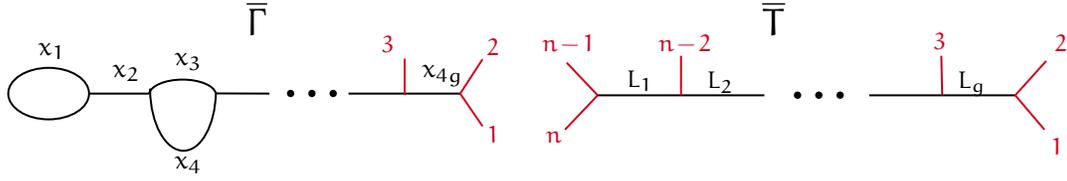

\subsection{Examples in low genera}\label{sec:examples}
%We have picked a point $p$ in the interior of a maximal cone of $\Mgn{g,n}^\trop\times \Mgn{0,n}^\trop$. The computation of tropical Tevelev degree could also be done for any other point in the interior of a maximal cone. 

\subsubsection{Base case: $g=1$.}\label{sec:g=1}

Recalling the set-up from Section \ref{sec:troptd}, in order to compute $\Tev_1^\trop$ we must compute the degree of the map
\begin{equation}
    \Fs\times \Ft: \Hur{1,2,4}^\trop \to \Mgn{1,4}^\trop \times \Mgn{0,4}^\trop.
\end{equation}

Consider the point $p = (\overline{\Gamma}, \overline{T})\in \Mgn{1,4}^\trop \times \Mgn{0,4}^\trop$ depicted in Figure \ref{fig:pgen1}. The set $(\Fs\times \Ft)^{-1}(p)$ consists of covers $\phi: \Gamma\to T$ such that $T$ stabilizes to 
$\overline{T}$ when forgetting  the four marked ends with branching data $(2)$,  and $\Gamma$ stabilizes to $\overline\Gamma$ when forgetting the four marked ends with expansion factor $2$ as well as all the unmarked ends.

\begin{figure}[b]
    \centering
     \resizebox{\textwidth}{!}{\tikzset{every picture/.style={line width=0.75pt}} %set default line width to 0.75pt        

\begin{tikzpicture}[x=0.75pt,y=0.75pt,yscale=-1,xscale=1]
%uncomment if require: \path (0,326); %set diagram left start at 0, and has height of 326

%Shape: Ellipse [id:dp5782861933967296] 
\draw  [line width=1.5]  (62,135) .. controls (62,129.34) and (66.92,124.75) .. (73,124.75) .. controls (79.08,124.75) and (84,129.34) .. (84,135) .. controls (84,140.66) and (79.08,145.25) .. (73,145.25) .. controls (66.92,145.25) and (62,140.66) .. (62,135) -- cycle ;
%Straight Lines [id:da80840137390775] 
\draw [line width=1.5]    (84,135) -- (288,135) ;
%Straight Lines [id:da2831388374720125] 
\draw [color={rgb, 255:red, 208; green, 2; blue, 27 }  ,draw opacity=1 ][line width=1.5]    (288,135) -- (325,171) ;
%Straight Lines [id:da4810994696213846] 
\draw [color={rgb, 255:red, 208; green, 2; blue, 27 }  ,draw opacity=1 ][line width=1.5]    (326,107) -- (288,135) ;
%Straight Lines [id:da8260018667404243] 
\draw [color={rgb, 255:red, 208; green, 2; blue, 27 }  ,draw opacity=1 ][line width=1.5]    (193,134) -- (192,95) ;
%Straight Lines [id:da1062873097758088] 
\draw [color={rgb, 255:red, 208; green, 2; blue, 27 }  ,draw opacity=1 ][line width=1.5]    (126,135) -- (125,96) ;
%Straight Lines [id:da26931716603834666] 
\draw [line width=1.5]    (474,139) -- (905,139) ;
%Straight Lines [id:da4987499108342093] 
\draw [color={rgb, 255:red, 208; green, 2; blue, 27 }  ,draw opacity=1 ][line width=1.5]    (905,139) -- (942,175) ;
%Straight Lines [id:da7167496141684073] 
\draw [color={rgb, 255:red, 208; green, 2; blue, 27 }  ,draw opacity=1 ][line width=1.5]    (942,106) -- (905,139) ;
%Straight Lines [id:da4734209853230378] 
\draw [color={rgb, 255:red, 208; green, 2; blue, 27 }  ,draw opacity=1 ][line width=1.5]    (438,103) -- (451.68,116.31) -- (475,139) ;
%Straight Lines [id:da23233761345959647] 
\draw [color={rgb, 255:red, 208; green, 2; blue, 27 }  ,draw opacity=1 ][line width=1.5]    (475,139) -- (437,167) ;

% Text Node
\draw (333,171.4) node [anchor=north west][inner sep=0.75pt] [font = \Large]   {$\textcolor[rgb]{0.82,0.01,0.11}{1}$};
% Text Node
\draw (953,173.4) node [anchor=north west][inner sep=0.75pt]  [font = \Large]   {$\textcolor[rgb]{0.82,0.01,0.11}{1}$};
% Text Node
\draw (334,94.4) node [anchor=north west][inner sep=0.75pt] [font = \Large]    {$\textcolor[rgb]{0.82,0.01,0.11}{2}$};
% Text Node
\draw (951,89.4) node [anchor=north west][inner sep=0.75pt] [font = \Large]    {$\textcolor[rgb]{0.82,0.01,0.11}{2}$};
% Text Node
\draw (188,68.4) node [anchor=north west][inner sep=0.75pt] [font = \Large]    {$\textcolor[rgb]{0.82,0.01,0.11}{3}$};
% Text Node
\draw (422,171.4) node [anchor=north west][inner sep=0.75pt]  [font = \Large]   {$\textcolor[rgb]{0.82,0.01,0.11}{3}$};
% Text Node
\draw (120,69.4) node [anchor=north west][inner sep=0.75pt]  [font = \Large]   {$\textcolor[rgb]{0.82,0.01,0.11}{4}$};
% Text Node
\draw (419,93.4) node [anchor=north west][inner sep=0.75pt] [font = \Large]    {$\textcolor[rgb]{0.82,0.01,0.11}{4}$};
% Text Node
\draw (64,99.4) node [anchor=north west][inner sep=0.75pt]  [font = \Large]   {$x_1$};
% Text Node
\draw (96,111.4) node [anchor=north west][inner sep=0.75pt] [font = \Large]    {$x_2$};
% Text Node
\draw (154,112.4) node [anchor=north west][inner sep=0.75pt]  [font = \Large]   {$x_3$};
% Text Node
\draw (232,110.4) node [anchor=north west][inner sep=0.75pt]  [font = \Large]   {$x_4$};
% Text Node
\draw (686,116.4) node [anchor=north west][inner sep=0.75pt]  [font = \Large]   {$L_1$};
% Text Node
\draw (166,27.4) node [anchor=north west][inner sep=0.75pt]  [font=\LARGE]  {$\overline{\Gamma}$};
% Text Node
\draw (673,28.4) node [anchor=north west][inner sep=0.75pt]  [font=\LARGE]  {$\overline{T}$};

\end{tikzpicture}}
    \caption{The point $p$ in $\Mgn{1,4}^\trop \times \Mgn{0,4}^\trop$. We have $x_1<<x_2<<x_3<<x_4<<L_1$.}
    \label{fig:pgen1}
\end{figure}

Since $p$ lies in the interior of a maximal cone of (an appropriate refinement of) $\Mgn{1,4}^\trop \times \Mgn{0,4}^\trop$, we know that $T$ is a trivalent tree. Forgetting the four marked ends labeled $1, \ldots, 4$ and their inverse images, we obtain a cover $\tilde{\phi}: \tilde{\Gamma}\to \tilde{T}$, where $\tilde{T}$ is a trivalent tree with four ends assigned branching data $(2)$. There is exactly one Hurwitz cover of $\tilde{T}$,  consisting of two ends with expansion factor one covering the compact edge of $\tilde{T}$ (and forming  a loop between two vertices),  and two infinite edges with degree $2$ coming from each vertex and covering the ends.

We now seek to recover the possible tropical curves $\Gamma$ by adding four marked points.
Since in $\overline\Gamma$ the four marked points belong to a tree that attaches to the loop at a trivalent vertex, we conclude that all $4$ marked points must belong to the same connected component of $\Gamma$ minus the loop, i.e. they must all attach to the same infinite edge of degree two. Due to $L_1>>x_i$ for all $i$, the cover curve $\Gamma$ should contain a long edge mapping to the compact edge of $\overline{T}$ that is lost when stabilizing to $\tilde{\Gamma}$.
The local description of how the two ways this can happen is depicted in Figure \ref{fig:deg2long}.

Given an edge $e$ of the cover of degree $2$, we can attach a trivalent tree to an 
interior point of $\phi(e)$ (on the base), and then mark ends upstairs as on the left hand side of Figure \ref{fig:deg2long}. When stabilized, the cover curve no longer has the length $w$, so $w$ is free to be as long as needed for the compact edge of the base curve to reach length $L_1$.

The only other option is to attach a tripod with two marked points in an interior point of $\phi(e)$,  and then put one marked point on each of the two trees covering the tree in the base, as on the right hand side of Figure \ref{fig:deg2long}. Now the unique length $y$ is lost in the stabilization of $\Gamma$.

From the above local pictures we can get global inverse images of $p$ by placing the marked points on $\tilde{\Gamma}$ to recover $\Gamma$ in two possible ways.
We can attach the tree from the left hand side of Figure \ref{fig:deg2long} at distance $x_2$ from the loop; we can then choose the length $y = x_3, z = x_4 , w = L_1-x_4$.

Alternatively, we place mark $4$ at distance $x_2$ from the loop, followed by mark $3$ after a distance of $x_3$,  and finally  the marks $2$ and $1$ on a tripod (as in Figure \ref{fig:deg2long}) that attaches after distance $x_4$. We  choose $y = L-2x_4$.
Both types of inverse images of $p$ are shown in Figure \ref{fig:genus1}.
%For each type of cover, we have $4!/2$ distinct points in $\Hur{1,2,4}^\trop$, corresponding to the distinct labelings of the ends of $T$ with branching profile $2$. Since in the definition of tropical Tevelev degree we divide by an overall factor of $(4g)!$, it does no harm  to forget this factor while thinking of the ends of $T$ with branching data $(2)$ as unlabeled; each type of covers contributes to $(\Fs\times\Ft)^{-1}(p)$ as $1$ divided by the number of automorphisms of the cover of such partially labeled graphs.

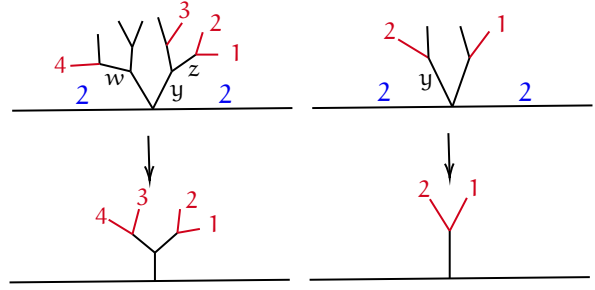
\begin{figure}
    \centering
    \tikzset{every picture/.style={line width=0.75pt}} %set default line width to 0.75pt        

\begin{tikzpicture}[x=0.5pt,y=0.5pt,yscale=-1,xscale=1]
%uncomment if require: \path (0,300); %set diagram left start at 0, and has height of 300

%Straight Lines [id:da7015347501602469] 
\draw    (334,100.5) -- (546,98.5) ;
%Straight Lines [id:da34322425886913177] 
\draw    (332,230.5) -- (544,228.5) ;
%Straight Lines [id:da6792078046520694] 
\draw    (438,229.5) -- (438,193.5) ;
%Straight Lines [id:da23926563304830284] 
\draw [color={rgb, 255:red, 208; green, 2; blue, 27 }  ,draw opacity=1 ]   (438,193.5) -- (423,169.5) ;
%Straight Lines [id:da19423615157624796] 
\draw [color={rgb, 255:red, 208; green, 2; blue, 27 }  ,draw opacity=1 ]   (438,193.5) -- (451,168.5) ;
%Straight Lines [id:da9888328962163133] 
\draw    (437,119.5) -- (437.94,151.5) ;
\draw [shift={(438,153.5)}, rotate = 268.32] [color={rgb, 255:red, 0; green, 0; blue, 0 }  ][line width=0.75]    (10.93,-3.29) .. controls (6.95,-1.4) and (3.31,-0.3) .. (0,0) .. controls (3.31,0.3) and (6.95,1.4) .. (10.93,3.29)   ;
%Straight Lines [id:da30023908490361406] 
\draw    (440,99.5) -- (422,62.5) ;
%Straight Lines [id:da5011560233679351] 
\draw    (440,99.5) -- (453,62.5) ;
%Straight Lines [id:da910179307021704] 
\draw [color={rgb, 255:red, 208; green, 2; blue, 27 }  ,draw opacity=1 ]   (422,62.5) -- (399,48.5) ;
%Straight Lines [id:da8058477936401909] 
\draw    (422,62.5) -- (421,38.5) ;
%Straight Lines [id:da6058328493230976] 
\draw    (453,62.5) -- (446,38.5) ;
%Straight Lines [id:da9503419241461887] 
\draw [color={rgb, 255:red, 208; green, 2; blue, 27 }  ,draw opacity=1 ]   (453,62.5) -- (468,42.5) ;
%Straight Lines [id:da3414845069863355] 
\draw [line width=0.75]    (215,231.5) -- (215,210) ;
%Straight Lines [id:da07658939314205249] 
\draw [line width=0.75]    (215,210) -- (232,195) ;
%Straight Lines [id:da5347785707995631] 
\draw [line width=0.75]    (215,210) -- (198,196) ;
%Straight Lines [id:da26472113687655585] 
\draw [color={rgb, 255:red, 208; green, 2; blue, 27 }  ,draw opacity=1 ][line width=0.75]    (198,196) -- (203,177) ;
%Straight Lines [id:da6654930568735822] 
\draw [color={rgb, 255:red, 208; green, 2; blue, 27 }  ,draw opacity=1 ][line width=0.75]    (198,196) -- (180,185) ;
%Straight Lines [id:da5694903695610476] 
\draw [color={rgb, 255:red, 208; green, 2; blue, 27 }  ,draw opacity=1 ][line width=0.75]    (232,195) -- (234,177) ;
%Straight Lines [id:da5178837473572786] 
\draw [color={rgb, 255:red, 208; green, 2; blue, 27 }  ,draw opacity=1 ][line width=0.75]    (232,195) -- (249,192) ;
%Straight Lines [id:da11265892181157411] 
\draw [line width=0.75]    (213.45,101) -- (196.53,72.75) ;
%Straight Lines [id:da967103598355767] 
\draw [line width=0.75]    (213.45,101) -- (227.76,72.75) ;
%Straight Lines [id:da8327458195518531] 
\draw [line width=0.75]    (196.53,72.75) -- (173.12,67.1) ;
%Straight Lines [id:da5941643391888332] 
\draw [line width=0.75]    (196.53,72.75) -- (197.83,54.38) ;
%Straight Lines [id:da02624372183752488] 
\draw [line width=0.75]    (227.76,72.75) -- (223.85,52.97) ;
%Straight Lines [id:da6359647793616914] 
\draw [line width=0.75]    (227.76,72.75) -- (245.97,60.03) ;
%Straight Lines [id:da20790357200900078] 
\draw [line width=0.75]    (173.12,67.1) -- (171.82,44.49) ;
%Straight Lines [id:da40437947979557465] 
\draw [color={rgb, 255:red, 208; green, 2; blue, 27 }  ,draw opacity=1 ][line width=0.75]    (173.12,67.1) -- (151,68.51) ;
%Straight Lines [id:da9488410504333498] 
\draw [line width=0.75]    (197.83,54.38) -- (187.43,34.6) ;
%Straight Lines [id:da06096239153912886] 
\draw [line width=0.75]    (197.83,54.38) -- (205.64,34.6) ;
%Straight Lines [id:da8400552470441667] 
\draw [line width=0.75]    (223.85,52.97) -- (217.35,31.78) ;
%Straight Lines [id:da636126932833107] 
\draw [color={rgb, 255:red, 208; green, 2; blue, 27 }  ,draw opacity=1 ][line width=0.75]    (223.85,52.97) -- (235.56,36.02) ;
%Straight Lines [id:da7665894860036033] 
\draw [color={rgb, 255:red, 208; green, 2; blue, 27 }  ,draw opacity=1 ][line width=0.75]    (245.97,60.03) -- (251.17,43.08) ;
%Straight Lines [id:da5181996684087484] 
\draw [color={rgb, 255:red, 208; green, 2; blue, 27 }  ,draw opacity=1 ][line width=0.75]    (245.97,60.03) -- (262.88,60.03) ;
%Straight Lines [id:da9311398311566119] 
\draw    (107,102.5) -- (319,100.5) ;
%Straight Lines [id:da8699501353840577] 
\draw    (105,232.5) -- (317,230.5) ;
%Straight Lines [id:da2890689036100449] 
\draw    (210,121.5) -- (210.94,153.5) ;
\draw [shift={(211,155.5)}, rotate = 268.32] [color={rgb, 255:red, 0; green, 0; blue, 0 }  ][line width=0.75]    (10.93,-3.29) .. controls (6.95,-1.4) and (3.31,-0.3) .. (0,0) .. controls (3.31,0.3) and (6.95,1.4) .. (10.93,3.29)   ;

% Text Node
\draw (381,79.4) node [anchor=north west][inner sep=0.75pt]  [font=\small,color={rgb, 255:red, 1; green, 1; blue, 240 }  ,opacity=1 ]  {$2$};
% Text Node
\draw (488,79.4) node [anchor=north west][inner sep=0.75pt]  [font=\small]  {$\textcolor[rgb]{0,0,0.94}{2}$};
% Text Node
\draw (450,152.4) node [anchor=north west][inner sep=0.75pt]  [font=\footnotesize,color={rgb, 255:red, 208; green, 2; blue, 27 }  ,opacity=1 ]  {$1$};
% Text Node
\draw (412,153.4) node [anchor=north west][inner sep=0.75pt]  [font=\footnotesize,color={rgb, 255:red, 208; green, 2; blue, 27 }  ,opacity=1 ]  {$2$};
% Text Node
\draw (471,28.4) node [anchor=north west][inner sep=0.75pt]  [font=\footnotesize,color={rgb, 255:red, 208; green, 2; blue, 27 }  ,opacity=1 ]  {$1$};
% Text Node
\draw (386,33.4) node [anchor=north west][inner sep=0.75pt]  [font=\footnotesize,color={rgb, 255:red, 208; green, 2; blue, 27 }  ,opacity=1 ]  {$2$};
% Text Node
\draw (413,72.4) node [anchor=north west][inner sep=0.75pt]  [font=\footnotesize]  {$y$};
% Text Node
\draw (168,173.4) node [anchor=north west][inner sep=0.75pt]  [font=\footnotesize,color={rgb, 255:red, 208; green, 2; blue, 27 }  ,opacity=1 ]  {$4$};
% Text Node
\draw (199,159.4) node [anchor=north west][inner sep=0.75pt]  [font=\footnotesize,color={rgb, 255:red, 208; green, 2; blue, 27 }  ,opacity=1 ]  {$3$};
% Text Node
\draw (230.86,17.7) node [anchor=north west][inner sep=0.75pt]  [font=\footnotesize,color={rgb, 255:red, 208; green, 2; blue, 27 }  ,opacity=1 ]  {$3$};
% Text Node
\draw (236,161.4) node [anchor=north west][inner sep=0.75pt]  [font=\footnotesize,color={rgb, 255:red, 208; green, 2; blue, 27 }  ,opacity=1 ]  {$2$};
% Text Node
\draw (254.28,27.59) node [anchor=north west][inner sep=0.75pt]  [font=\footnotesize,color={rgb, 255:red, 208; green, 2; blue, 27 }  ,opacity=1 ]  {$2$};
% Text Node
\draw (253,179.4) node [anchor=north west][inner sep=0.75pt]  [font=\footnotesize,color={rgb, 255:red, 208; green, 2; blue, 27 }  ,opacity=1 ]  {$1$};
% Text Node
\draw (269.89,50.19) node [anchor=north west][inner sep=0.75pt]  [font=\footnotesize,color={rgb, 255:red, 208; green, 2; blue, 27 }  ,opacity=1 ]  {$1$};
% Text Node
\draw (224.51,80.69) node [anchor=north west][inner sep=0.75pt]  [font=\footnotesize]  {$y$};
% Text Node
\draw (175.12,72.5) node [anchor=north west][inner sep=0.75pt]  [font=\footnotesize]  {$w$};
% Text Node
\draw (237.86,65.79) node [anchor=north west][inner sep=0.75pt]  [font=\footnotesize]  {$z$};
% Text Node
\draw (153,81.4) node [anchor=north west][inner sep=0.75pt]  [font=\small,color={rgb, 255:red, 1; green, 1; blue, 240 }  ,opacity=1 ]  {$2$};
% Text Node
\draw (261,80.4) node [anchor=north west][inner sep=0.75pt]  [font=\small,color={rgb, 255:red, 1; green, 1; blue, 240 }  ,opacity=1 ]  {$2$};
% Text Node
\draw (137,59.4) node [anchor=north west][inner sep=0.75pt]  [font=\footnotesize,color={rgb, 255:red, 208; green, 2; blue, 27 }  ,opacity=1 ]  {$4$};

\end{tikzpicture}
    \caption{A local picture of two configurations of marked points on a degree two tropical cover that allow for a long edge to be forgotten in the stabilization of the cover curve.}
    \label{fig:deg2long}
\end{figure}

\begin{figure}
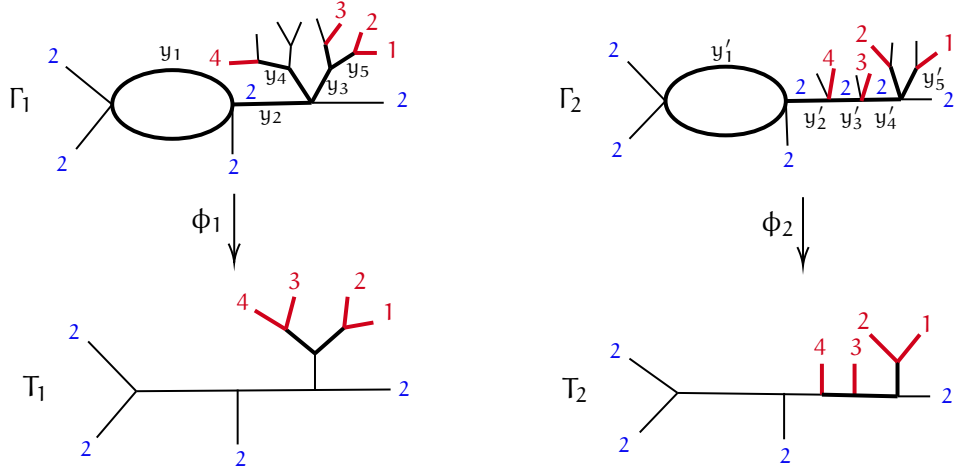

    \centering
    \include{Figures/genus1}
    \caption{The two covers in $(\Fs\times\Ft)^{-1}(p)$. The thickened parts show the subgraphs to which the graphs stabilize via the maps $\Fs, \Ft$. }
    \label{fig:genus1}
\end{figure}

We now compute the local degree of $(\Fs\times\Ft)$ at these inverse images, which gives us the multiplicities with which we need to count the covers.

We follow \eqref{eq:locdegforlater}, and notice that assumption $(\star)$ is verified; hence, for $i = 1,2$ the multiplicity of the inverse image $\phi_i:\Gamma_i\to T_i$ is the product of three factors: an automorphism factor, a product of local Hurwitz numbers and a dilation factor corresponding to the determinant of  the matrix representing the map $\Fs\times\Ft$.

The automorphism factor is the same for $i = 1,2$.
For each cover, we have $\Aut(\phi_i) = \mu_2\times \mu_2$: one factor corresponds to switching simultaneously the two unlabeled left ends of branching type $2$ and their inverse images; the second factor consists of switching the two degree $1$ edges of $\Gamma_i$ forming the loop. However, this is also a nontrivial automorphism of $\overline{\Gamma_i}$. Altogether, we have
\begin{equation}
    \frac{|\Aut(\overline{\Gamma_i})|}{|\Aut(\phi_i)|} = \frac{1}{2}.
\end{equation}

The local Hurwitz numbers factors are also computed identically for $i = 1,2$:  every vertex in the cover is either trivalent with degree one edges in all directions or two edges of degree 2 in different directions and 2 edges of degree 1 in the same direction. Both of these types of vertices have local Hurwitz number equal to 1, therefore the product of all local Hurwitz numbers is 1. 

 To calculate the dilation factors, we set up the following matrices representing the $x_i$'s and $L_j$'s in terms of the $y_k$'s:
\[\begin{array}{cc}
  M_1=\begin{blockarray}{cccccc}
   &y_1 & y_2 & y_3 & y_4 & y_5\\
    \begin{block}{c[ccccc]}
      x_1 & 2 & 0 & 0 & 0 & 0\\
      x_2 & 0 & 1 & 0 & 0 & 0\\
      x_3 & 0 & 0 & 1 & 0 & 0\\
      x_4 & 0 & 0 & 0 & 0 & 1\\
      L_1 & 0 & 0 & 0 & 1 & 1\\
    \end{block}
  \end{blockarray} &
 M_2=\begin{blockarray}{cccccc}
   &y'_1 & y'_2 & y'_3 & y'_4 & y'_5\\
    \begin{block}{c[ccccc]}
      x_1 & 2 & 0 & 0 & 0 & 0\\
      x_2 & 0 & 1 & 0 & 0 & 0\\
      x_3 & 0 & 0 & 1 & 0 & 0\\
      x_4 & 0 & 0 & 0 & 1 & 0\\
      L_1 & 0 & 0 & 0 & 1 & 1\\
    \end{block}
  \end{blockarray}
  \end{array}
\]
We see that $|\det M_i |=2$ for $i = 1,2$.
All together the multiplicity of each cover in $(\Fs\times\Ft)^{-1}(p)$ is $1\cdot \frac{1}{2}\cdot 2=1$. Since we have two inverse images each with multiplicity one,  we obtain $\Tev^\trop_1 = 2$.

\subsubsection{Low genus examples: $g=2$.}
To compute $\Tev_2^\trop$, we look for the degree of the map
\begin{equation}
    \Fs\times \Ft: \Hur{2,3,5}^\trop \to \Mgn{2,5}^\trop \times \Mgn{0,5}^\trop.
\end{equation}

We consider the point $p=(\overline{\Gamma},\overline{T})\in \Mgn{2,5}^\trop \times \Mgn{0,5}^\trop$ as illustrated in Figure \ref{fig:image}. With notation as in the previous section, we know that $\tilde{T}$ is a trivalent tree, but unlike the genus 1 case, there are multiple options for how this tree can be shaped. For two of these $\tilde{T}$-trees it is  possible to place fragments of the marked tree $\overline{T}$ to obtain the base $T$ of the cover. 

When adding the 5 marked points, we use the same techniques as in the previous section to get two long edges of $\Gamma$ that map to  compact edges of $\overline{T}$ and are lost when stabilizing to $\overline{\Gamma}$. All of the preimages are shown in Figure \ref{fig:genus2}.

\begin{figure}[h]
    \centering
    \include{Figures/genus2}
    \caption{The four covers in $(\Fs\times\Ft)^{-1}(p)$ when $g=2$. The thickened parts show the subgraphs to which the graphs stabilize via the maps $\Fs, \Ft$.}
    \label{fig:genus2}
\end{figure}

We can now find the multiplicities of these covers by computing the local degree of $(\Fs\times\Ft)$ at all 4 inverse images. The automorphism factor is the same for all 4. For each cover, there are two pairs of ends of branching type 2 that can be switched, corresponding to the first two forks starting from the left side of the base cover; these give 4 automorphisms of $\phi$ that are not also automorphisms of $\overline{\Gamma}$. The product of local Hurwitz numbers for all covers is equal to 1, recall Convention \ref{conv:lochurnum}. 

\begin{figure}
    \centering
     \resizebox{\textwidth}{!}{\input{Figures/matricesg2}}
    \caption{The matrices computing the local degree of $(\Fs\times\Ft)$ at the four inverse images of $p$. We observe that the matrices are block diagonal, with one block having determinant a power of $2$ and the other determinant one.}
    \label{fig:matricesg2}
\end{figure}

The matrices that compute the multiplicities for these inverse images are written in Figure \ref{fig:matricesg2}.
All four matrices are block diagonal and have absolute value of the determinant equal to $4$, therefore all four covers have multiplicity $\frac{4}{4} =  1$, and we get $\Tev_2^\trop=4$. 

\subsubsection{Low genus examples: $g=3$.}
For our final example, consider the point $p=(\overline{\Gamma},\overline{T})\in \Mgn{3,6}^\trop\times\Mgn{0,6}^\trop$ shown in Figure \ref{fig:image}. There are eight preimages as shown in Figure \ref{fig:genus3}, each has multiplicity $1$, so $\Tev_3^\trop=8.$
\begin{figure}
    \centering
     \resizebox{.8 \textwidth}{!}{\input{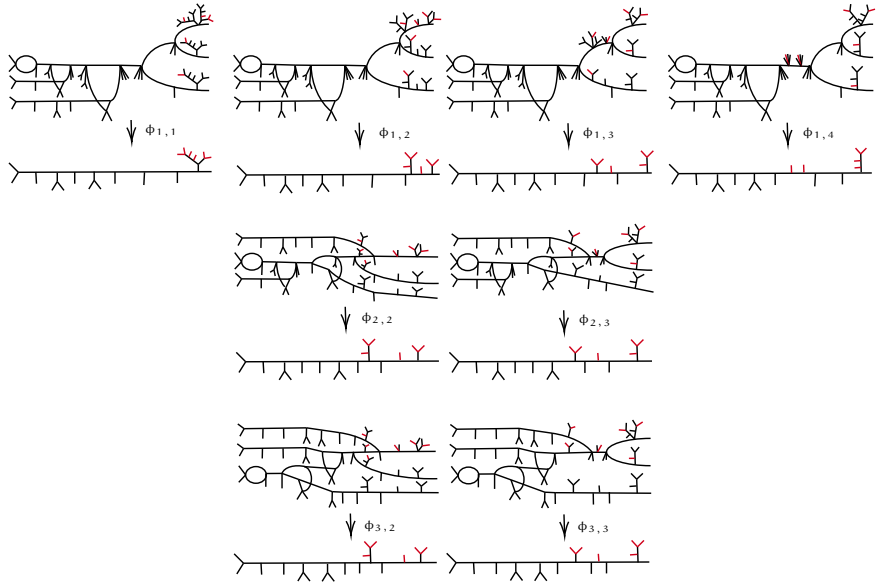}}
    \caption{Genus 3 covers contributing to $\Tev_3^\trop$}
    \label{fig:genus3}
\end{figure}
\subsection{Construction of $2^g$ solutions}

\label{sec:solutions} 
In this section we make explicit and generalize the constructions from the examples in Section \ref{sec:examples}, and construct $2^g$ preimages of $p$ for any genus $g$. We organize the task into four parts: in Section \ref{sec:genuspart} we build a cover containing all the genus and no marked points (we call it the genus part of  the cover and denote it by $\gGamma\to \gT$); in Section \ref{sec:markings}  we construct a tree containing the $n$ marked  ends that attaches to the genus part of $\Gamma$, which we call the marked tree part of the cover. In Section \ref{sec:mult} we show that the multiplicity of every cover constructed is equal to $1$. We conclude in Section \ref{sec:combinatorics} by organizing the combinatorics of the problem and show we have constructed $2^g$ covers contributing to $\Tev_g$.

\subsubsection{The genus part}
\label{sec:genuspart}

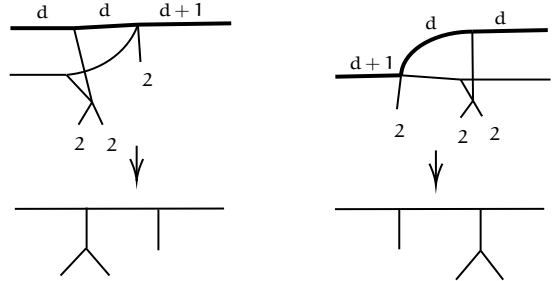
\begin{figure}
    \centering
    \tikzset{every picture/.style={line width=0.75pt}} %set default line width to 0.75pt        

\begin{tikzpicture}[x=0.75pt,y=0.75pt,yscale=-1,xscale=1]
%uncomment if require: \path (0,191); %set diagram left start at 0, and has height of 191

%Straight Lines [id:da8729580228671058] 
\draw    (26.81,130.05) -- (132.34,129.89) ;
%Straight Lines [id:da322230786397246] 
\draw    (188.28,130.2) -- (293.82,130.04) ;
%Straight Lines [id:da4025687788931368] 
\draw    (63.02,129.4) -- (63.02,149.97) ;
%Straight Lines [id:da9876472529972646] 
\draw    (62.75,149.97) -- (49.88,163.69) ;
%Straight Lines [id:da1113949263562537] 
\draw    (62.75,149.97) -- (74.55,163.69) ;
%Straight Lines [id:da3087332431753198] 
\draw    (99.23,130.38) -- (99.23,150.95) ;
%Straight Lines [id:da3477484219068654] 
\draw    (261.74,130.53) -- (261.74,151.1) ;
%Straight Lines [id:da7470454995448437] 
\draw    (261.74,151.1) -- (249.33,165.8) ;
%Straight Lines [id:da11302363632281542] 
\draw    (261.74,151.1) -- (273.12,165.8) ;
%Straight Lines [id:da9721418795827099] 
\draw    (220.7,129.88) -- (220.7,150.45) ;
%Straight Lines [id:da3445899264357414] 
\draw [line width=1.5]    (24.6,38.85) -- (56.63,38.85) ;
%Straight Lines [id:da5232004898636089] 
\draw [line width=0.75]    (24.22,62.4) -- (52.82,62.4) ;
%Straight Lines [id:da876805058084022] 
\draw [line width=0.75]    (56.63,38.85) -- (65.78,76.21) ;
%Straight Lines [id:da9056617820056625] 
\draw [line width=0.75]    (52.82,62.4) -- (65.78,76.21) ;
%Straight Lines [id:da5865747027841423] 
\draw [line width=1.5]    (88.89,37.07) -- (135.95,36.41) ;
%Shape: Arc [id:dp7451653976008515] 
\draw  [draw opacity=0][line width=0.75]  (89.08,36.53) .. controls (84.44,50.48) and (70.34,60.81) .. (53.2,62.38) -- (48,26.02) -- cycle ; \draw  [line width=0.75]  (89.08,36.53) .. controls (84.44,50.48) and (70.34,60.81) .. (53.2,62.38) ;  
%Straight Lines [id:da16155929458435503] 
\draw [line width=1.5]    (56.63,38.85) -- (88.89,37.07) ;
%Straight Lines [id:da8342705736477217] 
\draw [line width=0.75]    (65.78,76.21) -- (58.92,87.58) ;
%Straight Lines [id:da8445472301736902] 
\draw [line width=0.75]    (65.78,76.21) -- (71.12,87.58) ;
%Straight Lines [id:da42825251797440256] 
\draw [line width=0.75]    (88.89,37.07) -- (90.19,55.9) ;
%Straight Lines [id:da6240455774953737] 
\draw [line width=0.75]    (88.26,98.18) -- (88.26,113.82) ;
\draw [shift={(88.26,115.82)}, rotate = 270] [color={rgb, 255:red, 0; green, 0; blue, 0 }  ][line width=0.75]    (10.93,-3.29) .. controls (6.95,-1.4) and (3.31,-0.3) .. (0,0) .. controls (3.31,0.3) and (6.95,1.4) .. (10.93,3.29)   ;
%Straight Lines [id:da6884097816447193] 
\draw [line width=1.5]    (188.46,63.35) -- (221.61,62.64) ;
%Straight Lines [id:da22256765849817228] 
\draw [line width=0.75]    (221.61,62.64) -- (219.52,79.76) ;
%Shape: Arc [id:dp7714849084981868] 
\draw  [draw opacity=0][line width=1.5]  (221.61,62.65) .. controls (222.2,50.4) and (238.07,40.58) .. (257.55,40.58) .. controls (257.55,40.58) and (257.55,40.58) .. (257.55,40.58) -- (257.55,63.36) -- cycle ; \draw  [line width=1.5]  (221.61,62.65) .. controls (222.2,50.4) and (238.07,40.58) .. (257.55,40.58) .. controls (257.55,40.58) and (257.55,40.58) .. (257.55,40.58) ;  
%Straight Lines [id:da9435516291894191] 
\draw [line width=0.75]    (221.61,62.64) -- (251.63,64.78) ;
%Straight Lines [id:da8690442918094194] 
\draw [line width=0.75]    (257.55,40.58) -- (257.91,75.48) ;
%Straight Lines [id:da8440578873146625] 
\draw [line width=0.75]    (251.63,64.78) -- (257.91,75.48) ;
%Straight Lines [id:da7525887688211206] 
\draw [line width=0.75]    (251.63,64.78) -- (297,64.78) ;
%Straight Lines [id:da3538902444878168] 
\draw [line width=1.5]    (257.55,40.58) -- (295.6,39.81) ;
%Straight Lines [id:da7735095508682636] 
\draw [line width=0.75]    (257.91,75.48) -- (251.63,84.04) ;
%Straight Lines [id:da1696218289633875] 
\draw [line width=0.75]    (257.91,75.48) -- (262.49,83.32) ;
%Straight Lines [id:da18303346311891344] 
\draw [line width=0.75]    (239.18,100.55) -- (239.18,116.18) ;
\draw [shift={(239.18,118.18)}, rotate = 270] [color={rgb, 255:red, 0; green, 0; blue, 0 }  ][line width=0.75]    (10.93,-3.29) .. controls (6.95,-1.4) and (3.31,-0.3) .. (0,0) .. controls (3.31,0.3) and (6.95,1.4) .. (10.93,3.29)   ;

% Text Node
\draw (35.52,25.92) node [anchor=north west][inner sep=0.75pt]  [font=\tiny,color={rgb, 255:red, 0; green, 0; blue, 0 }  ,opacity=1 ]  {$d$};
% Text Node
\draw (97.48,24.85) node [anchor=north west][inner sep=0.75pt]  [font=\tiny,color={rgb, 255:red, 0; green, 0; blue, 0 }  ,opacity=1 ]  {$d+1$};
% Text Node
\draw (66.93,24.85) node [anchor=north west][inner sep=0.75pt]  [font=\tiny,color={rgb, 255:red, 0; green, 0; blue, 0 }  ,opacity=1 ]  {$d$};
% Text Node
\draw (193.67,51.27) node [anchor=north west][inner sep=0.75pt]  [font=\tiny,color={rgb, 255:red, 0; green, 0; blue, 0 }  ,opacity=1 ]  {$d+1$};
% Text Node
\draw (231.66,31.13) node [anchor=north west][inner sep=0.75pt]  [font=\tiny,color={rgb, 255:red, 0; green, 0; blue, 0 }  ,opacity=1 ]  {$d$};
% Text Node
\draw (266.49,28.8) node [anchor=north west][inner sep=0.75pt]  [font=\tiny,color={rgb, 255:red, 0; green, 0; blue, 0 }  ,opacity=1 ]  {$d$};
% Text Node
\draw (55.09,92.19) node [anchor=north west][inner sep=0.75pt]  [font=\tiny]  {$2$};
% Text Node
\draw (71.35,92.76) node [anchor=north west][inner sep=0.75pt]  [font=\tiny]  {$2$};
% Text Node
\draw (89.29,60.3) node [anchor=north west][inner sep=0.75pt]  [font=\tiny]  {$2$};
% Text Node
\draw (216.51,85.79) node [anchor=north west][inner sep=0.75pt]  [font=\tiny]  {$2$};
% Text Node
\draw (248.47,88.64) node [anchor=north west][inner sep=0.75pt]  [font=\tiny]  {$2$};
% Text Node
\draw (264.58,87.23) node [anchor=north west][inner sep=0.75pt]  [font=\tiny]  {$2$};

\end{tikzpicture}
    \caption{Two possible ways to add genus. We omit from the picture ends of degree $1$ to avoid clutter. The active path is thickened. We remark that the two fragments are just the reflection	of one another about a vertical axis, but the two distinct directions play an important role in our story.}
    \label{fig:2genusoptions}
\end{figure}
For any  integer $d\geq 1$, we consider the two tropical covers of degree $d+1$ depicted in 
%Consider a genus constructed in 3 simple transpositions, 
Figure \ref{fig:2genusoptions}, which we call $U$ and $D$, and together we call {\it genus fragments}. The base of each cover is a trivalent tree with five ends. We think of this tree as an oriented line to which we attach one simple end and a tripod, in the two possible orders. 
The cover curves are connected genus one tropical curves; for every horizontal end/edge of the base curve there is a unique end/edge in the cover mapping to it with degree greater than one; we call it the {\it active end/edge} and we call the collection of all active ends/edges the {\it active path}. In the case of fragment $D$ when $d=1$, we choose an arbitrary connected lift of the horizontal path of the base curve and declare it to be the active path.

%ch genus we have an edge with hig Call this edge with highest degree the active path, it is thickened in Figure \ref{fig:2genusoptions}. The active path is also the edge that the marked points stabilize to. The first way to make a loop ade by putting the first two transpositions on $T$ and the last transposition after that, call this type of genus $U$. The next method to make a genus is to put the last two transpositions on $T$ with a single transposition before it, call this type of genus $D$. 
Notice that in the two cases the degree of the active path either goes up or down by one after making the genus. %Also, if the degree of the active edge decreases by one, then we lose a useable edge of degree one because there are two edges coming out of the loop and if we connect these going forward, we would get an extra loop. We also cannot put marked points on both edges coming out of the loop because of the shape of the curve we are mapping to in $\Mgn{g,n}^{\trop}$.

For any genus $g\geq 2$, we construct (topological types of) tropical covers of genus $g$, degree $d\leq g+1$ as follows:
\begin{enumerate}
\item Start with the unique degree $2$,  genus one cover of a trivalent, four ended tree as in Section \ref{sec:g=1}; choose any one of the four ends to be the active edge;
\item attach a sequence of $g-1$ genus fragments by gluing the rightmost horizontal end of the base graph to the leftmost end of the next graph,  and gluing the corresponding active edges above. Observe that the fragment $U$ can always be used,  whereas the fragment $D$ can be used whenever the degree of the attaching active end is greater than one.
\item complete the resulting graphs with appropriate portions of degree $1$ to make it into an honest global cover of tropical curves. This can be always done in a unique way.
\end{enumerate}
If the fragment $D$ has been used $i$ times we obtain a connected cover of degree $g+1-i$ with  active end of degree $g+1-2i$. We complete it to a degree $g+1$ cover by adding $i$ disjoint copies of the  base curve each mapping with degree one.

We conclude this section with the elementary observation that for all covers thus constructed the degree of the active end has the same parity as $g+1$. 

We have constructed tropical	covers containing all the genus and none of the marked points needed: we call these covers the {\it genus part} of our solutions and denote them by $\gGamma \to \gT$. 

\begin{comment}\subsubsection*{The two genus fragments}
\rcolor{Here you introduce and describe the first two pictures in your Figure $5$.}\\
\begin{figure}
    \centering
    \include{Figures/2genusoptions}
    \caption{Two possible ways to add genus}
    \label{fig:2genusoptions}
\end{figure}

Consider a genus constructed in 3 simple transpositions, Figure \ref{fig:2genusoptions} shows 2 possible ways to make such a genus with the independent lengths needed for the relations between $x_i$'s. Going into each genus we have an edge with highest degree, $d$, and possibly other edges of degree 1. Call this edge with highest degree the active path, it is thickened in Figure \ref{fig:2genusoptions}. The active path is also the edge that the marked points stabilize to. The first way to make a loop is made by putting the first two transpositions on $T$ and the last transposition after that, call this type of genus $U$. The next method to make a genus is to put the last two transpositions on $T$ with a single transposition before it, call this type of genus $D$. 
Notice that in the two ways to add genus the degree of the active path either goes up or down by one after making the genus. Also, if the degree of the active edge decreases by one, then we lose a useable edge of degree one because there are two edges coming out of the loop and if we connect these going forward, we would get an extra loop. We also cannot put marked points on both edges coming out of the loop because of the shape of the curve we are mapping to in $\Mgn{g,n}^{\trop}$.
\end{comment}

\subsubsection{The marked tree part}
\label{sec:markings}

Given a cover $\Gamma\to T$, we denote by $\tilde{\Gamma}\to \tilde{T}$ the cover obtained by forgetting the $n$ marked points. For any genus part of a cover $\gGamma\to \gT$ constructed in the previous section, %there is a unique rightmost edge of degree strictly greater than one, which we call the {\it active edge} of the genus part; 
the degree of the active end can be any positive number congruent to $d$ modulo $2$. Given a genus part with active edge of degree $d-2i$, we show first how to complete it to unmarked covers $\tilde{\Gamma}\to \tilde{T}$; next we add further trees containing the (images of the) marked points to $\tilde{T}$ to obtain $T$; finally we describe the inverse images of these trees and describe how to place the marked points on them.

\subsubsection*{Cuts and joins: constructing $\tilde{\Gamma}\to \tilde{T}$}

For any genus part of a cover, we complete $\gT$ to $\tilde{T}$ by extending horizontally the end that the active edge maps to and attaching to it $g-1$ vertical ends. Together with the final horizontal end, we have added $g$ simple branched ends to $\gT$ and therefore obtained a good candidate for $\tilde{T}$.
A consequence of the simple branching conditions is that for any cover of $\tilde\Gamma\to \tilde{T}$, the inverse image of the horizontal edge of $\tilde{T}$ must be a trivalent tree, hence it should be obtained from the active edge by a series of {\it cuts and joins} (illustrated in Figure \ref{fig:cutjoin}).

\begin{figure}[tb]
    \centering
    \tikzset{every picture/.style={line width=0.75pt}} %set default line width to 0.75pt        

\begin{tikzpicture}[x=0.5pt,y=0.5pt,yscale=-1,xscale=1]
%uncomment if require: \path (0,300); %set diagram left start at 0, and has height of 300

%Straight Lines [id:da5168638504919244] 
\draw [line width=1.5]    (90.5,138) -- (300.5,138) ;
%Straight Lines [id:da8704387355808054] 
\draw [line width=1.5]    (368,138) -- (579,138.5) ;
%Shape: Arc [id:dp562703387696144] 
\draw  [draw opacity=0] (91.56,71.61) .. controls (93.61,71.54) and (95.67,71.5) .. (97.75,71.5) .. controls (156.17,71.5) and (203.57,101.23) .. (204,138) -- (97.75,138.5) -- cycle ; \draw   (91.56,71.61) .. controls (93.61,71.54) and (95.67,71.5) .. (97.75,71.5) .. controls (156.17,71.5) and (203.57,101.23) .. (204,138) ;  
%Shape: Arc [id:dp6260902630407716] 
\draw  [draw opacity=0] (577.74,204.4) .. controls (575.69,204.48) and (573.62,204.53) .. (571.54,204.54) .. controls (513.13,204.79) and (465.59,175.27) .. (465.01,138.5) -- (571.25,137.54) -- cycle ; \draw   (577.74,204.4) .. controls (575.69,204.48) and (573.62,204.53) .. (571.54,204.54) .. controls (513.13,204.79) and (465.59,175.27) .. (465.01,138.5) ;

\end{tikzpicture}
    \caption{The active edge, drawn thickened, is oriented from left to right, i.e. away from the genus part. An edge joining the active edge is shown on the left, and an edge cutting from the active edge is shown on the right.}
    \label{fig:cutjoin}
\end{figure}
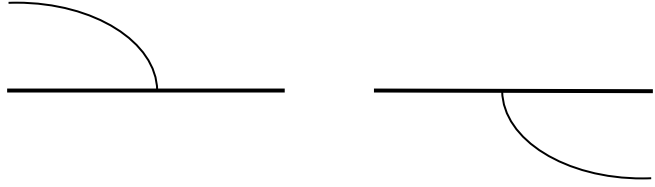

Consider a genus part $\gGamma\to \gT$ where the active edge has degree $d$: then we obtain a unique cover $\tilde{\Gamma}\to \tilde{T}$ by a sequence of cuts where one of the ends has expansion factor equal to $1$, and the other (of degree greater than one) is the new {\it active edge}. We call the sequence of active edges the {\it active path}. Observe that since we have $g-1$ cuts, the final active end has degree $d-(g-1) = 2$, and is therefore a simply ramified end, as required. We observe that since all the non-active ends have degree $1$, there is a unique way to extend them to a cover of the portion of the tree $\tilde{T}$ that they must cover.

For $1\leq i\leq \lfloor \frac{d-1}{2} \rfloor$, we complete the genus part $\gGamma_i\to \gT_i$ in $d-2i$ different ways: informally, the idea is that we keep all the joins together. 
Formally, for $0\leq k\leq d-2i-1$, we denote by $\tilde{\Gamma}_{i,k}
\to \tilde{T}_{i,k}$ the cover where the active path consists of $k$ cuts, followed by $i$ joins, followed by the remaining cuts, as illustrated in Figure \ref{fig:covers}. We remark again that for each of the degree one edges emanating from the active path, there is a unique way to complete them to a degree one cover of the portion of $\tilde{T}$ they must cover.

%Given a Hurwitz cover mapping to $(\overline{\Gamma},\overline{T})$ for a given genus $g$, we view the cover as two parts, one with all of the genus and one that is of genus 0. 
%A vertex satisfying the local Riemann-Hurwitz property is trivalent when a simple transposition is removed. We have all simple transpositions or unramified points, therefore when ignoring the simple transpositions all vertices are trivalent. Looking at the genus 0 section of the cover, it is made by connecting joins and cuts along the active path shown in Figure \ref{fig:cutjoin}. Since this section of the graph is genus 0, the edge joining or coming from the cut can not be connected elsewhere.
%Given $i$ branches joining the active branch, where $i$ is from $0$ to $\lfloor \frac{d-1}{2} \rfloor$, and assuming all joins occur in a row, we get $\Gamma$ to look like the covers shown in Figure \ref{fig:covers}. From left to right, the $i$ branches join after $j$ cuts, where $0\leq j\leq d-2i-1$. After all joins, $\Gamma$ ends with as many cuts as needed to make a simple transposition.

\begin{figure}
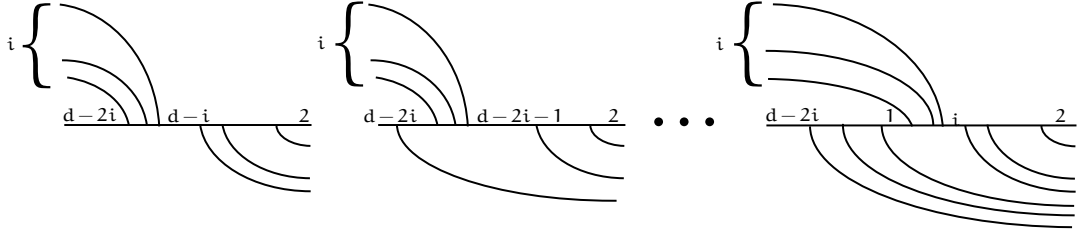

    \centering
    \include{Figures/covers}
    \caption{The sequence of cuts and joins admitted for a given $i$.}
    \label{fig:covers}
\end{figure}

\subsubsection*{The marking fragments}

In the next section,  we will construct $T$ from $\tilde{T}$ by attaching to the horizontal edge of $\tilde{T}$ one of the $(n-2)$ marked fragments depicted in Figure \ref{fig:markfrag}. Note that since we are in the base cover, the marks are really representing the images of the marks.
Finally observe that $F_1$ contains the tree $\overline{T}$; for $1\leq j\leq n-3$, the tree $\overline{T}$ is formed by the fragment together with the portion of the horizontal edge of $\tilde{T}$ between the two outer connected components of the fragment. For $j = n-2$, when stabilizing, the $n$-th marked end will be adjacent to the $(n-1)$-th at a vertex $v$, and $\overline{T}$ will consist of these two ends, the portion of the horizontal edge from $v$ to the remaining part of the fragment, and the remaining part of the fragment.
For now, we don't discuss how the connected components of the fragments are positioned with respect to the simple branched ends stemming from the horizontal edge of $\overline{T}$. We will discuss the various possible cases when we describe the inverse images of the fragments to construct the cover $\Gamma$.

%
%We now outline the different ways to attach the $n$ marked points to $T$. One way to do this is to generalize what we saw in our examples with a fragment that consists of a single tree containing all marked points. Assume for $n$ marked points that the only tree that works is a tree that splits into the last two marked points on a branch and the rest of the points on another. For $n+1$ points, we take our tree from $n$ marked points and add another marked point on the edge labeled $n$. Our new marked point goes on our new branch of $\Gamma$ that breaks off from the active edge to the right first, allowing the $n+1$ to stabilize as 1st point on active edge. Then by assumption, the rest of the points stabilize how we need them to. Therefore, we have a tree of this shape for every $n$.

%Next, we can generalize another fragment type that showed up in the three examples. This fragment type consists of two single marked points followed by a tree containing the rest of the marked points. Every added marked point can be added on the tree at the end of the fragment 

%Finally, we can organize the $n$ marked points in 3 groups on the fragment. Due to the shape of $\overline{\Gamma}$, the two outside groups will be trees with 2 or more marked points on them and there will be a single marked point between the trees. The possible options for the number of points on the first tree is between 1 and $n-3$, which is $n-4$ options. All together there are $n-2$ fragment options for $n$ marked points.

\begin{figure}
    \centering
     \resizebox{\textwidth}{!}{\input{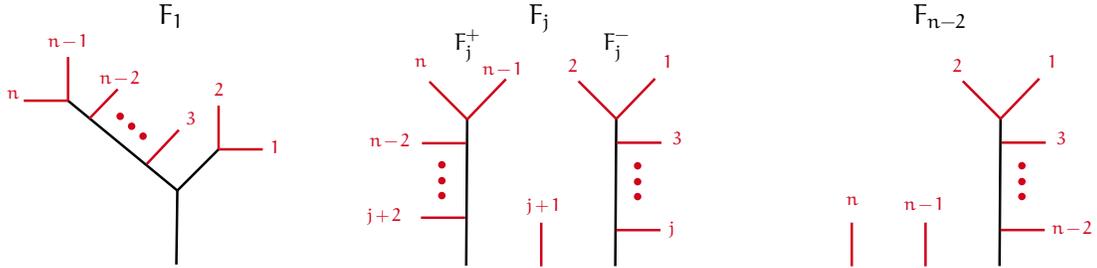}}
    \caption{Marked fragments that attach to the horizontal edge of $\tilde{T}$ to obtain the base graph $T$. We denote by $F_j^-$ the connected component that contains the marks with the lowest indices, and $F_j^+$  the one containing the highest labels.}
    \label{fig:markfrag}
\end{figure}

\subsubsection*{Constructing the covers}

In this section we construct covers $\Gamma \to T$ in $(\Fs\times \Ft)^{-1}(p)$. Given a genus part of the cover ending with an active edge of degree $d-2i$ and a fragment $F_j$ with \begin{equation}\label{eq:range}
  i \leq j \leq n-2-i = d-i,  
\end{equation}  we show how to attach $F_j$ to $\tilde{T}$ to obtain $T$, and how to mark the inverse images of the fragment to obtain the cover $\Gamma$. We discuss several cases.

\noindent{\textsc{Case 1: $i>0$.}}  Given a genus part with active edge of degree $d-2i$ and $j$ in the range specified in \eqref{eq:range}, we complete the genus part to a cover of type $\tilde{\Gamma}_{i,k}\to \tilde{T}_{i,k}$ as described in Section \ref{sec:markings}, for $k = d-i-j$. We first describe the topological type of the cover, and then concern ourselves with the metric information. Refer to  Figure \ref{fig:gensol} to follow the various constructions involved.

\begin{figure}[tb]
    \centering
     \resizebox{1\textwidth}{!}{\input{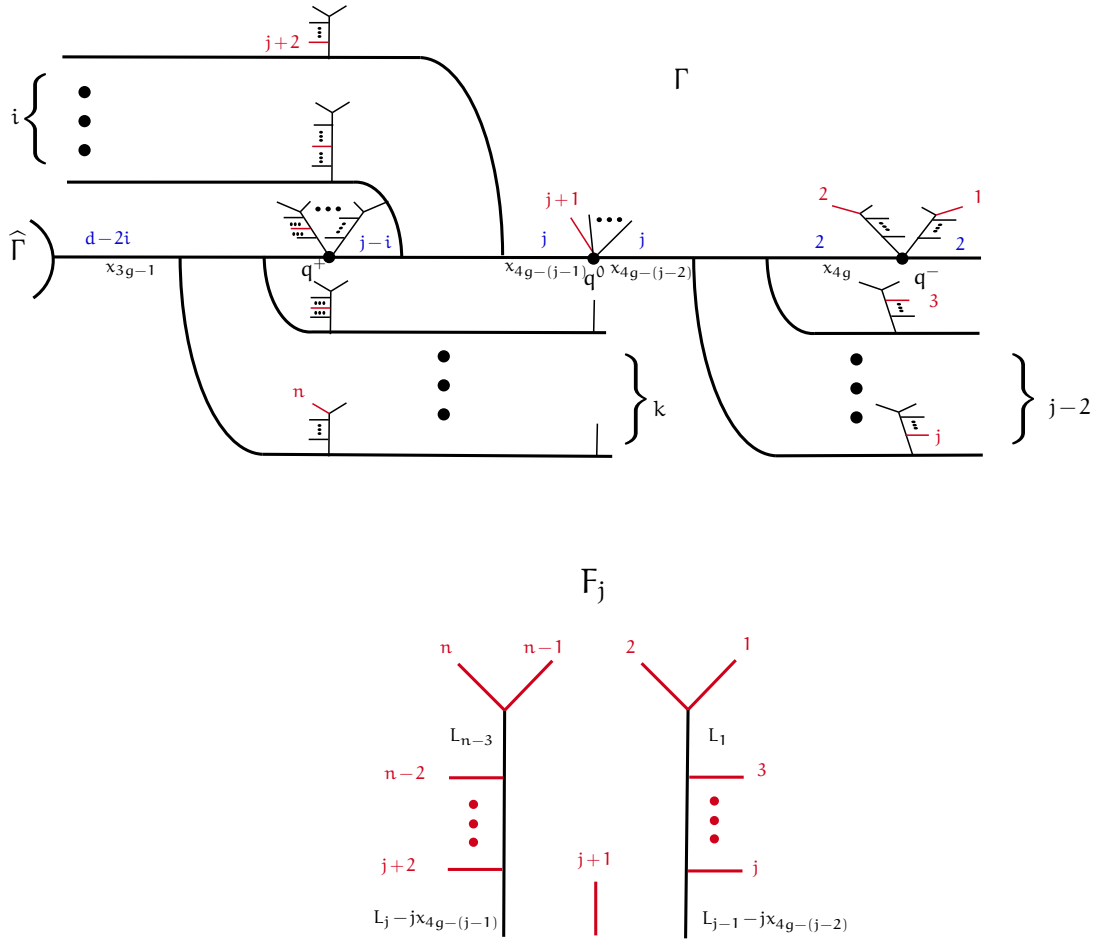}}
    \caption{The top part of the picture represents the tree part of the cover $\phi_{i,j}$ for $i\not=0$. The inverse images of the points $q^\pm,q^0$ on the active path are still denoted by the same names to not clutter the picture. The metric information for the fragment $F_j$ is depicted in the bottom part of the picture as it would not fit above: since all connected components of the inverse image of $F_j$ have local degree one, the lengths are the same in the top graph.}
    \label{fig:gensol}
\end{figure}

 Pick three points on the (image of the) active path\footnote{Since we are concerned only with the tree part of the cover, we now start counting the edges of the active path from right after the last loop is closed.}:
\begin{description}
    \item[$q^+$] on the $(k+1)$-th edge of the active path, separating the first set of cuts from the joins;
    \item[$q^0$] on the $(i+k+1)$-th edge of the active path, separating the joins from the remaining cuts;
    \item[$q^-$] on the last edge of the active path.
    \end{description}

To  obtain ${T_{i,k}}$, we attach $F_j^+$ to the point $q^+$, the (image of the) marked point $j+1$ to $q^0$ and $F_j^-$ to the point $q^-$.

For $\Gamma_{i,k}$, we mark the points in the inverse images of $F_j$ as follows:
\begin{enumerate}
    \item Order the  (connected components of the) inverse images of $F_j$ according to the order of their closest point to the active path;
    \item place the mark $n$ on the first inverse image of $F_j^+$
    \item Place all marks $>2$ in descending order one on each consecutive inverse image of (the appropriate connected component of) $F_j$.
    \item Place the marks $1,2$ on the last inverse image of $F_j^-$, making sure that the two marks are on distinct branches of such inverse image: note that since the point $q^-$ lies on an edge of degree $2$, the inverse image of  $F_j^-$ consists of two copies of $F_j^-$.
\end{enumerate}

We now proceed to describe the metric information. The active path contains $(g+2)$ edges: give the first one length $x_{3g-1}$, and each successive length of $\overline{\Gamma}$ until $x_{4g}$.

For all edges of the fragment $F_j$, except the two bottom edges of $F_j^\pm$, give them the length $L_i$ of the corresponding edge in $\overline{T}$.
The bottom edge of $F_j^-$ is given length $L_{j-1}-jx_{4g-(j-2)}$, and the bottom edge of $F_j^+$ is given length $L_{j}-jx_{4g-(j-1)}$. We have thus constructed a cover in the inverse image of $p$ which we call $\phi_{i,j}$.

\begin{comment}
Given a section of $\Gamma$ as shown in Figure \ref{fig:covers}, we can now find ways to place our marked points. Looking at $\Gamma$ when $i\neq 0$, there is always an edge after all joins before the final cuts. Call this edge $b$. To each side of $\phi(b)$ there is a spot on $T$ that is covered by the most branches, where $f$ is the covering map. Call this point on the left $Y_l$ and the point on the right $Y_r$. $Y_l$ has $i_l$ branches covering it and $Y_r$ has $i_r$ branches covering it. We put a tree with $i_l$ marked points on $Y_l$, a tree with $i_r+1$ marked points on $Y_r$, and a single marked point on the section covered by $b$. How we place the marked points for these types is as follows: 
\begin{enumerate}
    \item For the tree to the left, mark one point on each edge in numerical order as the edges cut from the active path. 
    \item Mark the next number on the tree on the active path covering the tree to the left.
    \item Mark the rest of the numbers covering the left tree, one on each edge in order as the edges join the active path. 
    \item Mark a single marked point on $b$.
    \item Mark points over the right tree in the order the edges cut from the active path.
    \item Mark the last two points on opposite trees on the last section of the active edge that is degree two.    
\end{enumerate}
\end{comment}

\begin{figure}[tb]
    \centering
     \resizebox{1\textwidth}{!}{\input{Figures/i0sol}}
    \caption{The top part of the picture represents the tree part of the cover $\phi_{i,j}$ for $i=0, j\not=1$. The metric information for the fragment $F_j$ is depicted in the bottom part of the picture.}
    \label{fig:gensoli0}
\end{figure}

\vspace{0.2cm}

\textsc{Case 2: $i=0, j>1$.}
This case is illustrated in Figure \ref{fig:gensoli0}. When $i=0$, i.e. the genus part of the cover ends with an active edge of degree $d$, the only option for the cover $\tilde{\Gamma}_{1,j}\to \tilde{T}_{1,j}$ is to be formed by a sequence of $g-1$ cuts from the active edge.

To obtain the (topological type of the) base graph $T_{1,j}$, mark two points $q^+,q^0$, in this order, on the edge after $(n-2-j)$ cuts; place a mark $q^-$ on the last edge of the active path. Attach the fragments $F_j^\pm$ to the points $q^\pm$ and the mark $j+1$ to the point $q^0$.
On the cover graph, the marks are placed on the inverse images of the fragments in descending order as you proceed along the active path. As in the previous case, the marks $1$ and $2$ should be on distinct branches of the inverse image of $F_j^-$.
The metric information is also analogous to the previous case, and it is illustrated in Figure \ref{fig:gensoli0}.

\begin{figure}
    \centering
     \resizebox{\textwidth}{!}{\input{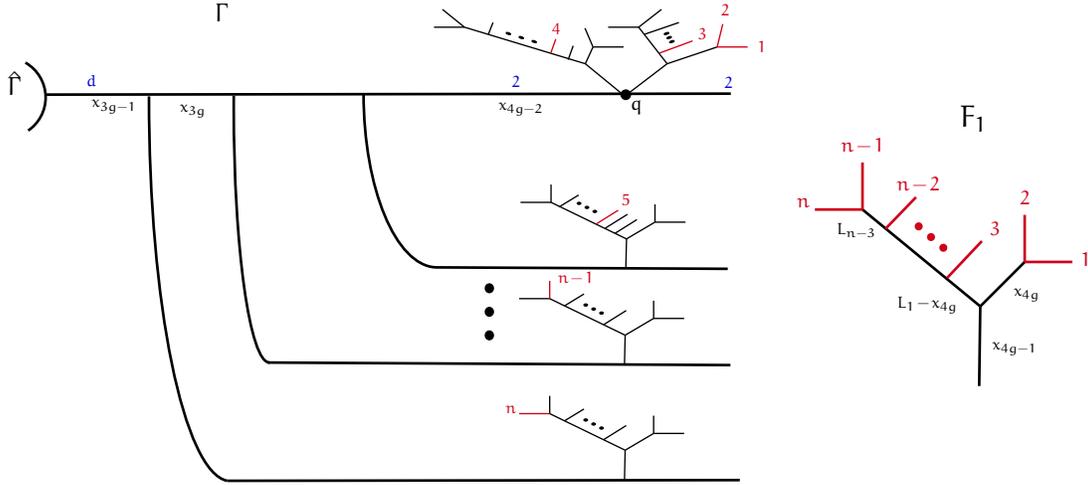}}
    \caption{The left part of the picture represents the tree part of the cover $\phi_{i,j}$ for $i=0, j=1$. The metric information for the fragment $F_1$ is depicted on the right part of the picture.}
    \label{fig:i0j1}
\end{figure}

\vspace{0.2cm}

\textsc{Case 3: $i=0, j=1$.}
This case is illustrated in Figure \ref{fig:i0j1}. Again, the base cover is obtained by performing a sequence of cuts on the active edge. We mark one point on the last edge of the active path, and attach the fragment $F_1$ to it.
On the cover curve, the marks are placed  one on each connected component of the inverse image of $F_1$ in descending order, so that the mark $n$ will stabilize to the vertex of the first cut, and so on. 
The last connected component of the inverse image of $F_1$ attaches to a degree $2$ edge, and therefore consists of two copies of the fragment $F_1$. We mark the point $4$ on one copy, and the points $3,2,1$ on the other.

As for the metric information, consider the path from $\widehat \Gamma$ to the vertex supporting the marks $1,2$, and give the edges of this path lengths $x_{3g-1}, \ldots, x_{4g}$. The remaining edges of the fragment are given lengths $L_1-x_{4g}, L_2, \ldots, L_{n-3}$ as depicted in Figure \ref{fig:i0j1}.

\subsubsection{Multiplicities}
\label{sec:mult}

In this section we compute the local degree of the map $\Fs\times\Ft$ at each of the inverse images of the point $p$, giving the multiplicities we need to count the covers constructed with.
%We now investigate how to count the Hurwitz covers in the preimage. 
Following \eqref{eq:localdegree}, the local degree is the product of three factors: an automorphism factor, a local Hurwitz numbers factor, and a dilation factor.

There are two types of local Hurwitz numbers appearing in the graphs constructed: either numbers of the form $H_0(\alpha, (2,1^{d-2}), \beta)$, or of the form $H_0((d),(d), 1)$, where this notation means that the third point on the base is not a branch point, but only one of its inverse images is marked (see Convention \ref{conv:lochurnum}). Both these types of Hurwitz numbers are equal to one, and therefore the Hurwitz number factor is also equal to one.

The dilation factor equals the determinant of the matrix $M_g$ giving the local expression for the map $\Fs\times \Ft$; call $x_i, L_j$ the lengths of the edges of the graphs $\overline{\Gamma}, \overline{T}$, and $y_k$ the lengths of $5g$ edges of $\Gamma$ chosen as in observation $(2)$ after condition $(\star)$; then the rows of the matrix express the $y_k$'s as linear functions of $x_i, L_j$. Since all covers can be split into a genus part  and a marked tree part, the matrix used to calculate the dilation factor is block diagonal. The block corresponding to the genus part has size $3g-2$, the other block has size $2g+2$.
\begin{claim}\label{claim:treeblock}
The determinant of the marked tree block equals one.    
\end{claim}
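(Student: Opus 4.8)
The strategy is to make explicit the linear map that the marked-tree block of $M_g$ represents, and then reorganize the coordinates so that the matrix becomes (upper or lower) triangular with $\pm 1$ on the diagonal. Concretely, recall that for each of the covers $\phi_{i,j}$ constructed in Section~\ref{sec:markings}, assumption $(\star)$ holds, and by observation $(2)$ following $(\star)$ we may choose as coordinates on $\sigma_\Theta$ the lengths of the active edges together with the lengths of the edges of a single chosen connected component of the inverse image of each fragment edge. The target coordinates on the marked-tree block are the lengths $x_{3g-1},\dots,x_{4g}$ of the caterpillar tree of $\overline\Gamma$ together with the lengths $L_1,\dots,L_{n-3}$ of the compact edges of $\overline T$ (here $n-3 = g$), a total of $2g+2$ coordinates, matching the block size.

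\textbf{Key steps.} First I would read off, from the explicit metric assignments in the three cases (Figures~\ref{fig:gensol}, \ref{fig:gensoli0}, \ref{fig:i0j1}), the $2g+2$ linear relations expressing the chosen source lengths $y_k$ of the marked-tree part in terms of $x_{3g-1},\dots,x_{4g}$ and $L_1,\dots,L_g$. Most of these relations are of the tautological form ``$y_k$ = (a single $x$) '' or ``$y_k$ = (a single $L$)'', coming from the edges of the fragment $F_j$ that simply carry over the length $L_i$ of the corresponding edge of $\overline T$, and from the edges of the active path that carry the lengths $x_{3g-1},\dots,x_{4g}$. The only nontrivial relations come from the two bottom edges of $F_j^\pm$, whose lengths are $L_{j-1}-j\,x_{4g-(j-2)}$ and $L_{j}-j\,x_{4g-(j-1)}$ (and similarly the single perturbed edge $L_1 - x_{4g}$ in Case~3). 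Second, I would order the target coordinates so that each ``tautological'' row contributes a standard basis vector and each of the (at most two) nontrivial rows, read in the order $L_1, L_2,\dots$, has its pivot on the $L$-variable it names, with the $x$-contribution appearing in an already-processed column; this exhibits the block as triangular. Third, I would compute the diagonal entries: the tautological rows give $1$, and the nontrivial rows give the coefficient of $L_{j-1}$ resp.\ $L_j$, which is $1$. Hence $\det(M_{\mathrm{tree block}}) = \pm 1$, and its absolute value is $1$. I would also check that the three cases ($i>0$; $i=0,j>1$; $i=0,j=1$) all fit this pattern, the only difference being which single $x$-variable appears in the $L_1$-row.

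\textbf{Main obstacle.} The conceptual content is easy; the real work is bookkeeping: verifying that the chosen source coordinates really are $2g+2$ in number and really do form a basis adapted to the integral structure (this is where observation $(2)$ after $(\star)$ is used, so that no extra lattice index appears), and that after fixing the ordering of the $L$'s the $x$-contributions of the two perturbed rows land in columns that have already been ``cleared,'' so that the matrix is genuinely triangular rather than merely block-triangular with an unresolved $2\times 2$ corner. The potential pitfall is a row like $L_1 - x_{4g}$ in Case~3 colliding with the active-path row that defines $x_{4g}$; but since $x_{4g}$ is itself a chosen source coordinate (the length of the last active edge, an edge of degree $2$ whose length is a coordinate by $(\star)$), that row is tautological $y = x_{4g}$ and sits in the ``already processed'' block, so the collision is harmless. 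Once this is laid out, the claim follows, and it is exactly the statement that the marked-tree part of every $\phi_{i,j}$ contributes dilation factor $1$, leaving all the ``$2$'s'' to come from the genus block, which is handled separately in Section~\ref{sec:mult}.
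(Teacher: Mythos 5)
Your proposal is correct and follows essentially the same route as the paper: the paper's proof likewise first strips off the rows for $x_{3g-1},\dots,x_{4g}$ (each a standard basis vector, since every mark stabilizes to an edge of the active path whose length is a chosen coordinate) and then notes that each $L_i$ receives exactly one off-active-path ``free'' length with coefficient $1$, which is precisely the triangular structure you describe. The only cosmetic difference is that you read off the relations in the inverse direction (source lengths in terms of the $x$'s and $L$'s, as given in the explicit metric assignments), which is harmless: that integer matrix is triangular with diagonal entries $\pm 1$, hence unimodular, so the block the paper actually uses has $|\det| = 1$ as well.
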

\begin{proof}
     The following two facts are used to calculate the determinant of this block. First, all marked points stabilize to the active path, so for $i\geq 3g-1$ the rows corresponding to $x_i$'s contain exactly one non-zero entry, which is in in fact equal to $1$. We can therefore eliminate the rows and columns containing these entries.
%     in terms of the lengths of the edges of the cover, call these $y_j$, will be one to one, giving us a single one in each $x_i$ row. 
Next, when writing the lengths $L_i$'s in terms of the lengths $y_j$'s, we observe that there is exactly one length of the cover that contributes to $L_i$ and does not lie on the active path. Recall that each cover has one free length for every $L_i$, and the corresponding  entry is $1$. These two facts together give that the absolute value of the determinant is one. 
\end{proof}

\begin{claim}
    The determinant of the block corresponding to the genus part of the cover is equal to $2^g$. 
\end{claim}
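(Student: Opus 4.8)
The plan is to choose orderings of the rows and columns of the genus block so that it becomes block lower triangular, with one diagonal block for each of the $g$ loops of $\overline{\Gamma}$ and one for each of the $g-1$ bridges of the chain of loops, and then to check that every loop block has determinant $\pm 2$ while every bridge block is the $1\times 1$ matrix $[1]$; multiplying these gives $|\det| = 2\cdot 2^{g-1}\cdot 1^{g-1} = 2^g$.

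First I would fix notation following Section~\ref{sec:genuspart}. The genus part $\gGamma\to\gT$ is built from the degree-$2$, genus-$1$ base cover by attaching a chain of $g-1$ genus fragments of type $U$ or $D$, and correspondingly the $3g-2$ edges $x_1,\dots,x_{3g-2}$ of $\overline{\Gamma}$ lying in the genus part fall into three families: the single self-loop edge of the first loop, the $g-1$ bridge edges of the chain, and the two arcs of each of the remaining $g-1$ loops. Here I would use that the ``outer'' vertex of the first loop becomes bivalent after forgetting all marks and is therefore contracted --- this is why, already in the $g=1$ computation, the loop is a single edge and $x_1 = 2y_1$ --- whereas all later loops survive as bigons. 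I would then take the $5g$ coordinate lengths $y_k$ on $\Gamma$ as in observation $(2)$ following $(\star)$, choosing for each fragment a set of cover edges local to that fragment.

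The heart of the argument is three local computations. (i) A bridge edge of $\overline{\Gamma}$ is the stabilization of a single cover edge of expansion factor $1$, so choosing that edge as a coordinate makes the corresponding row read $x_i = y_k$ and the block is $[1]$. (ii) The self-loop edge of the first loop is the concatenation of the two expansion-factor-$1$ edges of $\gGamma$ covering the compact edge $e$ of $\tilde T$; both equal $\ell_e$, so with $y_1 = \ell_e$ we get $x_1 = 2y_1$ and the block $[2]$. (iii) For each later loop the cycle of $\gGamma$ realizing it runs, away from its genus fragment, as two parallel lifts of a path in $\tilde T$; upon stabilization one arc becomes a single cover edge and the other collects a factor $2$ from the two parallel lifts, so, taking as the diagonal coordinate the $y_k$ fresh to this loop, the $2\times 2$ block is $\begin{pmatrix} 1 & 0\\ 2 & 2\end{pmatrix}$ up to a row/column permutation, of determinant $2$. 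In each case only the $y_k$'s local to the fragment in question occur, and since the fragments are linearly ordered along the chain, reordering rows and columns fragment by fragment puts the genus block into block lower triangular form with exactly the blocks above; the product of determinants is $2^g$. A cleaner packaging of the same computation is an induction on $g$ in which attaching one genus fragment multiplies the genus-block determinant by $2$, proved by cofactor expansion along the one or two new rows.

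I expect the main obstacle to be step (iii): checking \emph{uniformly} over the choices of fragments $U$ and $D$ (and in the degenerate case of fragment $D$ with $d=1$, where the active path is chosen by hand) that each loop of $\overline{\Gamma}$ acquires exactly one factor of $2$, and no more, in terms of the chosen coordinates --- that is, the precise bookkeeping of how the cover of each genus fragment in Figure~\ref{fig:2genusoptions} stabilizes and which of its edges are selected as coordinates. The genus-$1$ base case ($|\det|=2$) and the lower-triangular $4\times 4$ genus blocks of the genus-$2$ covers in Figure~\ref{fig:matricesg2}, each of determinant $4$, are the template for this verification and should make the bookkeeping routine once the coordinate choices are pinned down.
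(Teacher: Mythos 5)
Your proposal follows essentially the same route as the paper: there too the genus block is decomposed into a $1\times 1$ entry $2$ for the first loop, a diagonal entry $1$ for each bridge edge of the active path between loops, and one lower-triangular $2\times 2$ block per genus fragment, namely $M_U=\bigl(\begin{smallmatrix}1&0\\ d&2\end{smallmatrix}\bigr)$ or $M_D=\bigl(\begin{smallmatrix}1&0\\ d-1&2\end{smallmatrix}\bigr)$, so the product of determinants is $2\cdot 2^{g-1}\cdot 1^{g-1}=2^g$. The only discrepancy is your off-diagonal entry $2$ in step (iii) — in general it is $d$ or $d-1$ depending on the fragment and the local degree of the incoming active edge — but since the block is triangular with diagonal $(1,2)$ this does not affect the determinant, and your bookkeeping concern is exactly what the $M_U$/$M_D$ computation in the paper settles.
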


\begin{proof}
    
The block that corresponds to the genus part of the cover is itself made of smaller blocks.    We assign  the following matrices, $M_U$ and $M_D$, respectively, to the genus fragments $U$ and $D$. %both of which have determinant 2.
\[ M_U=\left[ \begin{array}{cc}
1 & 0 \\
d & 2
\end{array} \right]
M_D=\left[ \begin{array}{cc}
1 & 0 \\
d-1 & 2
\end{array} \right]
\]
The genus block of the matrix
is constructed as a sequence of $1\times 1$ and $2\times 2$ diagonal blocks as follows. 
There is an initial diagonal entry of $2$ corresponding to the first loop.
%to right along our graphs. %The first loop contributes a single 2 to the matrix. 
Then we have a block diagonal %matrix containing blocks that look like
entry of $M_U$ or $M_D$ depending on which type of genus fragment has been used to form the second genus. Between every loop there is an edge that is part of the active path and thus there is a 1 on the diagonal between any two consecutive $M_U$/$M_D$ blocks.
In conclusion, we have $g-1$ $(2\times 2)$-blocks of determinant $2$, one diagonal entry of $2$ and $g-1$ more diagonal entries of $1$, and the claim follows.
\end{proof}

Finally, we show that the determinant of the matrix $M_g$ equals the automorphism factor. 
The automorphisms of the cover that do not pull back from automorphisms of $\overline{\Gamma}$ correspond to switching pairs of simple branch points attached to the same vertex and their preimages. There is one such pair attached to the first loop, and one for each genus fragment, corresponding to the two ends of the attached tripod.
 All together, $|\Aut(\phi)|/|\Aut(\overline{\Gamma})|= 2^g$.
%Base case: we showed this in Section \ref{sec:g=1}\\
%Assume for a genus $g-1$ Hurwitz cover, $\phi_{g-1}$, $\frac{|\Aut(\phi_{g-1})|}{|\Aut(\overline{\Gamma}_{g-1})|}=|\det M_{g-1}|$\\
%Let $\phi_g$ be a Hurwitz cover of genus $g$, then depending on on if $U$ or $D$ is added from genus $g-1$,
%\[ |\det M_g|=|\det M_{g-1}|\cdot\left|\det\left[ \begin{array}{ccc}
%1 & 0 & 0\\
%0 & 1 & 0 \\
%0 & d & 2
%\end{array} \right]\right|
%\text{ or }
%|\det M_g|=|\det M_{g-1}|\cdot\left|\det\left[ \begin{array}{ccc}
%1 & 0 & 0\\
%0 & 1 & 0 \\
%0 & d-1 & 2
%\end{array} \right]\right|\\
%\]
%In either case we get $|\det M_g|=|\det M_{g-1}|\cdot2$. $U$ and $D$ both have a tree with two simple transpositions on it, which we can switch those two edges in $T$ and $\Gamma$, giving a factor of 2 to the number of automorphisms of $\phi$. Therefore, $|\Aut(\phi_{g})|=|\Aut(\phi_{g-1})|\cdot2$ and $\frac{|\Aut(\phi_{g})|}{|\Aut(\overline{\Gamma}_{g})|}=\frac{|\Aut(\phi_{g-1})|}{|\Aut(\overline{\Gamma}_{g-1})|}\cdot 2$ Then, by assumption, $\frac{|\Aut(\phi_{g})|}{|\Aut(\overline{\Gamma}_{g})|}=|\det M_g|$. Therefore 

Putting everything together, for any point $x$  corresponding to a genus $g$ Hurwitz cover $\phi$ that we have constructed, 
\[
%\frac{\overline{\Gamma}}{|\Aut(\phi)|}\cdot|\det(M)|\cdot\prod_{v\in V(\Gamma)} H_v
\deg_x(\Fs\times\Ft) =\frac{|\Aut(\overline{\Gamma})|}{|\Aut(\phi)|}\cdot|\det(M_g)|\cdot\prod_{v\in V(\Gamma)} H_v = \frac{1}{2^g}\cdot 2^g \cdot 1 = 1.
\]

\subsubsection{Counting solutions} \label{sec:combinatorics}
%\rcolor{Here we should construct a rectangular grid of solutions, whose columns give binomial coefficients.}
We now organize the covers constructed in a way that allows us to count them.
We put the covers  $\Gamma \rightarrow T$ inside (but not filling) a rectangular array, where the rows correspond to solutions with the same genus part, and the columns to covers with the same marked fragment type. We order the rows so that the degree of the active edge is non-increasing, and we order the marked fragments by the index $j$ as in Figure \ref{fig:markfrag}. %To build a grid of solutions for a given $g$, the solutions will be organized horizontally by marked fragments, giving
It is immediate that this table has $n-2$ columns, while it takes a bit of care to count the number of rows.

\begin{figure}
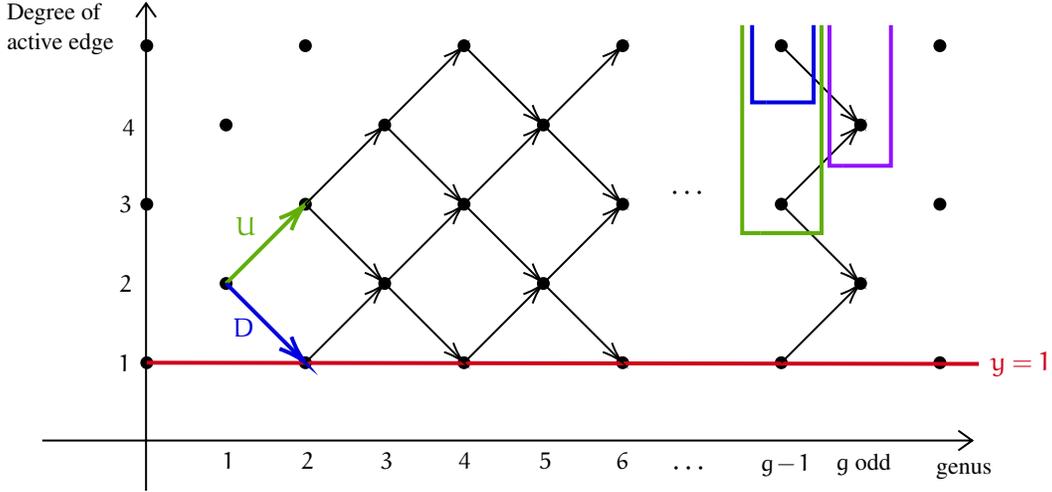

    \centering
    \include{Figures/genusdeggraph}
    \caption{A graph showing for each $g$ which degrees are possible for the active edge by adding $U$ or $D$, green or blue arrow, respectively. The green, blue, and purple boxes demonstrate the proof of Lemma \ref{lemma:deg active}.}
    \label{fig:genusdeggraph}
\end{figure}

Each genus part of a cover corresponds to a  word of length $g-1$ in the letters $U$ and $D$, subject to the condition that at every step the degree of the active end remains positive. There is a natural bijection, illustrated in Figure \ref{fig:genusdeggraph}, between genus parts of the cover and plane paths made by concatenating $g-1$ vectors of type $U = (1,1)$ or $D = (1,-1)$, starting at the point $(1,2)$ and never going below the $y=1$ line. In particular, the $y$-coordinate of the endpoint of a path corresponds precisely to the degree of the resulting active end.

While it is not immediate (at least to us) how to count the number of paths with a given endpoint, it is rather simple to count paths  with endpoint of the form $(g, y)$ with $(g,y_0)$ fixed and $y\geq y_0$; letting $y_0 = d-2i$ this counts the number of genus parts of covers with active end greater or equal than $d-2i$.

\begin{lemma} \label{lemma:deg active}
For $d = g+1 \geq 2$, $0\leq i\leq \lfloor\frac{d-1}{2}\rfloor$, denote by $A_{d, \geq d-2i}$ denote the number of paths described above with endpoint of coordinate $(d, y)$, $y\geq d-2i$. 
\begin{equation}
  A_{d,\geq d-2i}%:=  \sum_{k = 0}^i A_{d,d-2k} 
  = {{d-1}\choose{i}}.
\end{equation}
\end{lemma}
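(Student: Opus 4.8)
The plan is to count the lattice paths described just above the statement directly, via one application of the reflection principle followed by a telescoping sum. By the bijection recalled there, $A_{d,\ge d-2i}$ is the number of words of length $g-1=d-2$ in the letters $U=(1,1)$ and $D=(1,-1)$ whose associated path starts at height $2$, never drops below height $1$, and ends at height $\ge d-2i$. If such a word contains $b$ letters $D$ (hence $d-2-b$ letters $U$), the endpoint height is $2+(d-2-b)-b=d-2b$, so the terminal condition ``$\ge d-2i$'' is exactly ``$b\le i$''. Therefore $A_{d,\ge d-2i}=\sum_{b=0}^{i}P_b$, where $P_b$ denotes the number of such words with exactly $b$ letters $D$ whose path stays $\ge 1$.

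First I would evaluate $P_b$. Lowering every height by $1$, $P_b$ is the number of $\pm1$ paths of length $d-2$ that start at height $1$, stay $\ge 0$, and use exactly $b$ down-steps. Of the $\binom{d-2}{b}$ such paths with no sign constraint, the forbidden ones are those touching height $-1$; reflecting across the line $y=-1$ the portion up to the first such visit gives a bijection between forbidden paths and \emph{all} $\pm1$ paths of length $d-2$ starting at height $-3$ with the same endpoint, i.e. those with exactly $b-2$ down-steps, of which there are $\binom{d-2}{b-2}$. (Here the hypothesis $b\le i\le\lfloor\tfrac{d-1}{2}\rfloor$ keeps the endpoint height $\ge 0>-1$, so every such path from height $-3$ does cross $y=-1$; for $b\in\{0,1\}$ one reads $\binom{d-2}{-1}=\binom{d-2}{-2}=0$.) Hence $P_b=\binom{d-2}{b}-\binom{d-2}{b-2}$.

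Then the computation closes with
\[
A_{d,\ge d-2i}=\sum_{b=0}^{i}\left(\binom{d-2}{b}-\binom{d-2}{b-2}\right)=\binom{d-2}{i}+\binom{d-2}{i-1}=\binom{d-1}{i},
\]
where the middle equality is the telescoping of the sum and the last is Pascal's rule. I do not anticipate a genuine obstacle here; the only points requiring care are the height normalization, placing the reflection at the correct level $y=-1$, and checking that the range $0\le b\le i\le\lfloor\tfrac{d-1}{2}\rfloor$ stays within the regime where the reflection identity is exact. An equivalent and perhaps marginally cleaner route prepends a single virtual up-step to reduce the problem to counting length-$(d-1)$ Dyck prefixes with $b$ down-steps, whose number is the ballot number $\binom{d-1}{b}-\binom{d-1}{b-1}$, and then telescopes the same way; I would use whichever is shorter to write. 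A quick check against the low-genus examples ($g=1,2,3$, giving $A=\binom{1}{0}$; $A=\binom{2}{0},\binom{2}{1}$; $A=\binom{3}{0},\binom{3}{1}$) confirms the formula.
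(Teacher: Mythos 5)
Your proof is correct, but it follows a different route from the one in the paper. The paper argues by induction on $d$: it partitions the paths counted by $A_{d,\ge d-2i}$ according to their last step, obtaining $A_{d,\ge d-2i}=A_{d-1,\ge (d-1)-2i}+A_{d-1,\ge (d-1)-2(i-1)}$, and then invokes Pascal's rule, with the base case $d=2$ checked by inspection. You instead compute everything in closed form in one pass: fixing the number $b$ of $D$-steps pins the endpoint height at $d-2b$, the reflection principle (across $y=-1$, after the height shift) gives the ballot-type count $P_b=\binom{d-2}{b}-\binom{d-2}{b-2}$ for the constrained paths, and summing over $b\le i$ telescopes to $\binom{d-2}{i}+\binom{d-2}{i-1}=\binom{d-1}{i}$. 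Your bookkeeping is sound: the endpoint condition $y\ge d-2i$ is indeed equivalent to $b\le i$, the reflected start $-3$ and the count $\binom{d-2}{b-2}$ are right, and the constraint $b\le i\le\lfloor\frac{d-1}{2}\rfloor$ does guarantee the endpoint lies at height $\ge 0$, so every path from $-3$ crosses level $-1$ and the reflection is a genuine bijection. The trade-off: the paper's recursion is shorter and avoids any boundary-counting argument, while your method yields more refined information en route, namely the exact number $A_{d,d-2b}=\binom{d-2}{b}-\binom{d-2}{b-2}$ of genus parts with active edge of each fixed degree $d-2b$, not only the cumulative counts needed for the lemma.
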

\begin{proof}
    The statement is true by inspection for $d=2$. Next, partition paths in $A_{d,\geq d-2i}$ by the type of the last step. Those with last step $U$ are naturally in bijection with $A_{d-1,\geq d-2i-1}$, and those with last step $D$ are naturally in bijection  with $A_{d-1,\geq d-2i+1} =A_{d-1,\geq d -1-2(i-1)} $; inducting on $d$ we obtain
    \begin{equation}
        A_{d,\geq d-2i} = {{d-2}\choose{i}}+{{d-2}\choose{i-1}} = {{d-1}\choose{i}}.
    \end{equation}
\end{proof}

In Section \ref{sec:markings} we showed that for every genus part of a cover with active edge $d-2i$ we could complete it to a cover with marked fragment type $F_j$ with $i\leq j\leq n-2-i$. Counting the solutions by columns, we see that the columns corresponding to $F_j$ has exactly $A_{d, \geq d-j-1}$ covers. Recalling $d = g+1 = n-2$, we obtain:
\begin{equation}
    |(\Fs\times \Ft)^{-1}(p)| = \sum_{j = 1}^{d} A_{d, \geq d-j-1} = \sum_{m = 0}^{d-1}{{d-1}\choose{m}} = 2^{d-1} = 2^g.
\end{equation}

Since we showed in Section \ref{sec:mult} that each cover counts with multiplicity equal to one, we have thus far shown that $\Tev_g\geq 2^g$. In the next section we complete the proof of Theorem \ref{thm:ttev} by excluding the possibility of any further contributing cover.

\subsection{Excluding further solutions}

\label{sec:exclude}
In this section, we exclude any further cover $\Gamma \to T$ from mapping to  the chosen point $p= (\overline{\Gamma},\overline{T})\in \Mgn{g,n}^\trop\times \Mgn{0,n}^\trop$. We do this by showing no other marked fragments work, there is no other way to form a genus part of the cover with independent cycle lengths, and all joins on $\Gamma$ must occur in a row. 

\subsection*{Fragments attaching to the active edge}
%\rcolor{Intersection of base graph with tree chosen  cannot contain any lenghts of the latter else get a constrains between lengths that you need to be independent. }
We begin by introducing some notation.
Consider a cover $\phi: \Gamma\to T\in (\Fs\times\Ft)^{-1}(p)$. 

We call  the path connecting the last cycle of $\Gamma$ with the vertex in $\Gamma$ to which the ends marked $1$ and $2$ stabilize in $\overline{\Gamma}$ the {\it active path} of $\Gamma$, and denote it by $AP(\Gamma)$.
%Recall that we denote by $\widetilde{T}$ the stabilization of the base curve after forgetting the $n$-marked ends.
 While the stabilization function $T\to \overline{T}$ does not typically admit a global continuous section, such section exists when restricting our attention just to the compact edges of $\overline{T}$. We call this section $\sigma_{\overline{T}}:E(\overline{T})\to T$.

 \begin{lemma}
   %  Consider the stabilization $\widetilde{\pi}: T\to \widetilde{T}$. The image of t
   The intersection \begin{equation}\label{eq:inter} Im(\sigma_{\overline{T}})\cap \phi(AP(\Gamma)) \end{equation} does not contain the entire image of any edge of $\overline{T}$.
 \end{lemma}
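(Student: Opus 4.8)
The plan is to argue by contradiction, supposing that some edge $e$ of $\overline{T}$ has its entire image $\sigma_{\overline T}(e)$ contained in the image of the active path, and then extracting a contradiction with the length hierarchy $x_i \ll x_j \ll L_k \ll L_\ell$ that defines the point $p$. The key observation is dimensional/metric: the active path $AP(\Gamma)$ is built from edges of $\Gamma$ that are \emph{not forgotten} only at their two extreme ends (the ends marked $1,2$ and the connection to the last cycle); its interior edges all get contracted by $\Fs$, so their lengths are free parameters of $\Hur{g,d,n}^\trop$ that do not show up as lengths $x_i$ of $\overline\Gamma$. On the other hand, any edge $\tilde e$ of $\Gamma$ lying over an edge $e$ of $\overline T$ via $\sigma_{\overline T}$ has length $l_{\tilde e}=l_e/m_{\tilde e}$, and $l_e$ is one of the $L_k$'s, which are constrained to be enormous compared to all the $x_i$'s.

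First I would set up the notation carefully: fix $e\in E(\overline T)$ and suppose $\sigma_{\overline T}(e)\subseteq \phi(AP(\Gamma))$. Since $AP(\Gamma)$ is a path, its image $\phi(AP(\Gamma))$ is a path in $T$, and $\sigma_{\overline T}(e)$ is a subpath of it; pulling back, the portion of $AP(\Gamma)$ mapping onto $\sigma_{\overline T}(e)$ is a subpath $P\subseteq AP(\Gamma)$, consisting of active edges, whose total length is $l_e/m$ where $m$ is the (constant, by $(\star)$) expansion factor of the active edges over $e$. Now I would examine how $P$ sits inside $\Gamma$ and how its edges behave under $\Fs$. The edges of $AP(\Gamma)$ that survive in $\overline\Gamma$ are exactly those needed to separate the last cycle from the stabilizing vertex of marks $1,2$ and to realize the caterpillar tree of marked points; these contribute lengths among $x_{3g-1},\dots,x_{4g}$ (as in the explicit constructions of Section~\ref{sec:markings}). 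The interior edges of $AP(\Gamma)$ not of that form are contracted. The contradiction then comes from comparing magnitudes: either $P$ contains a surviving edge, forcing some $x_i \geq l_e/(m\cdot\#\text{edges}) \gtrsim L_k$, contradicting $x_i\ll L_k$; or $P$ consists entirely of contracted edges, but then — tracing the marked-point configuration — the whole subtree of $\overline T$ attached along $e$ must stabilize onto a single contracted locus, which is incompatible with $e$ being a compact edge of the stable tree $\overline T$ (it would mean $e$ carries no marked ends and no branching on either side once we remember only the $n$ marks, hence would be contracted in $\overline T$ itself).

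The step I expect to be the main obstacle is making precise the dichotomy in the last paragraph — specifically, showing that if the relevant subpath of $AP(\Gamma)$ is entirely contracted then $e$ could not have been a compact edge of $\overline T$. This requires a clean description of which edges of $T$ survive in $\overline T$: an edge of $T$ survives iff both of the two sides of $T$ it separates contain at least one of the $n$ marked images (and the resulting tree is stabilized). One must check that an edge $e$ whose section lies over a fully-contracted piece of $AP(\Gamma)$ has all of the $n$ marked images on one side — this uses the structure of the constructed covers, where all marked points stabilize onto the active path and sit on fragments hanging off $\sigma_{\overline T}(e')$ for the various edges $e'$, together with the fact that $AP(\Gamma)$ contracts only away from the marked-fragment attachment points. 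I would handle this by a direct case analysis along $AP(\Gamma)$: walk along the active path from the last cycle towards marks $1,2$, record at each surviving edge which $x_i$ it realizes, and note that between consecutive surviving edges the image in $\overline T$ can contain at most the portion of the horizontal edge between two marked-fragment attachments, never a full edge of $\overline T$ plus both its neighboring vertices. The rest — the magnitude comparison $x_i\ll L_k$ — is then immediate from the defining inequalities of $p$ recorded in Figure~\ref{fig:image}.
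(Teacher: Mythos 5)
Your overall strategy---contradiction plus a comparison of magnitudes against the hierarchy $x_i \ll L_k$---is in the right spirit, but the proposal rests on a false structural premise, and the case you yourself flag as the main obstacle is exactly where the argument breaks. You assert that the interior edges of $AP(\Gamma)$ are contracted by $\Fs$, so that their lengths are free parameters that do not appear among the $x_i$. This is not so: the active path joins the last cycle of $\Gamma$ to the vertex at which the ends marked $1,2$ stabilize, and both of these survive in $\overline{\Gamma}$; since stabilization only prunes trees hanging off the stable core and merges edges at $2$-valent vertices, the geodesic between two points of the core stays in the core, so every edge of $AP(\Gamma)$ survives, its length being a summand of one of $x_{3g-1},\dots,x_{4g}$. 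Consequently the total length of $AP(\Gamma)$ equals $x_{3g-1}+\cdots+x_{4g}$, and the length of $\phi(AP(\Gamma))$ is at most $d\cdot(x_{3g-1}+\cdots+x_{4g})$, which by the choice of $p$ is smaller than every $L_i$. Since $\sigma_{\overline{T}}(e)$ has length $L_i$ for some $i$, this single inequality is the entire proof in the paper.

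Because of the false premise, your dichotomy does not close. The second branch (``$P$ consists entirely of contracted edges'') is in fact vacuous, but you cannot see that within your setup, and the argument you sketch for it---that $e$ would then have to be contracted in $\overline{T}$---appeals to ``the structure of the constructed covers.'' At this point $\phi:\Gamma\to T$ is an arbitrary element of $(\Fs\times\Ft)^{-1}(p)$; the lemma is used precisely to constrain such covers before they are matched with the constructed ones, so that appeal is circular, and the claim that a section lying over a fully contracted stretch would force all $n$ marked images to one side of $e$ is never justified. Your first branch is also only a partial bound (some surviving edge has length at least $L_k/(mN)$, needing $m$ and the number of edges $N$ controlled), and it still leaves open the possibility that the length of $P$ is carried by the alleged free edges. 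Replacing the dichotomy by the observation that the whole active path has length $x_{3g-1}+\cdots+x_{4g}$ removes both difficulties and yields the lemma in one line.
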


\begin{proof}
 
    One of the aspects of the chosen point  $p$ is that the lengths $L_i$ in $\overline{T}$ are all much longer than the lengths in $\overline{\Gamma}$. The length of the active path in $\Gamma$ equals the sum of the lengths $x_{3g-1}+\ldots +x_{4g}$, and therefore
    the length of its image is bounded  by $d\cdot(x_{3g-1}+\ldots +x_{4g})$, which is by construction less  than any of the $L_i$, thus proving the Lemma. 
\end{proof}    

Since the marked ends stabilize to the active path, it must be that the intersection \eqref{eq:inter} is non-empty, and because the base tree is  trivalent it must be an interval with nonempty interior. Further,  \eqref{eq:inter} must consist either  of an interval of an external edge of $\sigma_{\overline{T}}(\overline{T})$, or an interval containing a marked end; if \eqref{eq:inter} were only a part of a single interior edge $e_i$ of $\sigma_{\overline{T}}(\overline{T})$, by varying the lengths of the two edges in $\Gamma$ adjacent to the intersection one would obtain infinitely many distinct graphs giving length $L_i$ to the stabilization of the edge $e_i$, which is not possible because the point $p$ has been chosen to be in the interior of a maximal cone of (the refinement of) the product $\Mgn{g,n}^\trop\times \Mgn{0,n}^\trop$.
It follows from this discussion that, after reintroducing the marked ends,  the complement 
$Im(\sigma_{\overline{T}})\smallsetminus \phi(AP(\Gamma))$ must be one of the $n-2$ fragments described in Section \ref{sec:markings}.

\subsection*{Splitting of transpositions}
%\rcolor{Need at least $n-3 = g$ on the marking part (for falling purposes) and at least $3g$ on the genus part (to get that much genus with independent lengths of cycles). But since the total number of transpositions is $4g$ those must be equalities. }

%The number of transpositions, $t$, on $\Gamma \to T$, is found using the Riemann-Hurwitz formula: 
%\begin{align*}
%    2g-2&=-2d+t\\
%    2g-2&=-2g-2+t\\
%    4g&=t.
%\end{align*}
We recall that the  graph $\widetilde{T}$, obtained by forgetting the images of the marked points, has $4g$ ends, all corresponding to simple branched ends for the tropical cover. Borrowing language from the monodromy representation of a Hurwitz cover, we call {\it transpositions} the collection of ends above a branched end which attach to a vertex $v$ with local degrees $2, 1^{d_v-2}$. As we remarked in Section \ref{sec:markings}, removing the simple transpositions and unramified parts of the cover leaves us with a trivalent graph from which the entire cover can be uniquely recovered. In what follows we refer to this trivalent graph, and relevant parts of it, by the names $\Gamma, \widetilde{\Gamma}, \gGamma$. We first analyze how the $4g$ transpositions are split between $\gGamma \to \gT$ and the rest of $\Gamma \to T$. 
\begin{claim}
    The genus part of the graph $\gGamma \to \gT$ contains at least $3g$ transpositions.
\end{claim}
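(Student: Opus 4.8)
The plan is to use the local Riemann--Hurwitz condition together with the structure of the chosen point $p$ to count how many of the $4g$ transpositions of $\widetilde T$ must lie in the genus part.

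\medskip

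First I would recall the basic budget: $\widetilde T$ has $4g$ branched ends, each carrying one transposition (a vertex with local degrees $2,1^{d_v-2}$). The cover curve $\widetilde\Gamma$ has genus $g$, and each genus must be ``created'' somewhere: a standard Riemann--Hurwitz / Euler characteristic count shows that forming one loop of the cover requires (at least) three transpositions concentrated along the relevant part of the curve. This is exactly the mechanism exploited in Section~\ref{sec:genuspart}, where each genus fragment $U$ or $D$ uses three simple branch points and the base tree has five ends. So a lower bound of the shape ``genus part needs $\geq 3g$ transpositions'' is the natural outcome; the content of the claim is that one cannot be cleverer and create the genus with fewer.

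\medskip

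The argument I would carry out has three steps. \emph{Step 1:} Decompose $\Gamma$ (in its trivalent incarnation after removing simple transpositions and unramified pieces) into the genus part $\widehat\Gamma\to\widehat T$ and the complementary marked-tree part. Since the marked-tree part is a cover of a \emph{tree} by a \emph{tree} (the marked fragments attach to $\widetilde T$ along the active path, and by the Fragments subsection the complement of $\phi(AP(\Gamma))$ is one of the $n-2$ tree fragments), the marked-tree part carries no genus. Hence all $g$ of the genus is carried by $\widehat\Gamma\to\widehat T$. \emph{Step 2:} Apply the global Riemann--Hurwitz formula to the subcover $\widehat\Gamma\to\widehat T$: with $\widehat T$ a tree and $\widehat\Gamma$ of genus $g$, summing the local Riemann--Hurwitz conditions $\val_v + 2g_v - 2 = d_v(\val_{\phi(v)}-2)$ over the vertices of the genus part, and bounding the ``ramification'' contributions of non-transposition vertices (which in the genus part can only be the high-ramification vertices of the loops — these are forced to be few by the chain-of-loops structure, as the introduction indicates), yields that the number of transpositions in the genus part is at least $3g$. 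The key numeric input is that each transposition contributes exactly $1$ to the total ramification, and that closing $g$ independent loops with the dimension constraints of $p$ costs $3g$ units of ramification in the genus part — the high-ramification vertices do not help reduce this because of the parity/chain constraints established in Section~\ref{sec:genuspart}. \emph{Step 3:} Combine Steps 1 and 2.

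\medskip

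The main obstacle, and the step deserving the most care, is Step~2: ruling out ``efficient'' loop formation using vertices of high ramification rather than transpositions. A priori one might imagine creating a loop at a single vertex of large local degree with a big partition, which would use fewer branch points. One must show that the constraint that $p$ lies in the interior of a \emph{maximal} cone — i.e. that all $5g$ edge lengths of $(\overline\Gamma,\overline T)$ deform independently, with the $x_i$'s a chain of \emph{independent} loops attached to a tree — forces each loop of $\widehat\Gamma$ to be ``resolved'' into its constituent transpositions, so that each of the $g$ loops genuinely consumes three of the $4g$ transpositions and these triples are disjoint across loops. I would make this precise by arguing that if a loop of $\widehat\Gamma$ used fewer than three transpositions, then (after accounting for the unique non-transposition vertex types allowed, namely $H_0((d),(d),1)$-type vertices) the corresponding block of the matrix $M_{\sigma_\Theta}$ from \eqref{eq:locdegforlater} would be degenerate, or equivalently that fewer than $3g-2$ independent lengths would be available in the genus block — contradicting the independence of the $x_i$'s forced by $p$. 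This is the crux; the remaining bookkeeping (Steps 1 and 3) is routine.
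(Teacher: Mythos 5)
Your overall strategy coincides with the paper's: bound the number of transpositions loop by loop using Riemann--Hurwitz, and invoke the genericity of $p$ to rule out loops that are formed more cheaply. The genuine gap is in where you place the ``routine bookkeeping''. Step~2 as stated --- that summing the local Riemann--Hurwitz conditions over the genus part, with high-ramification vertices dismissed ``by the parity/chain constraints established in Section~\ref{sec:genuspart}'', already yields $3g$ --- is false, and the appeal to Section~\ref{sec:genuspart} is circular: that section only constructs particular covers and proves nothing about an arbitrary cover in the fiber. Riemann--Hurwitz alone permits a loop made with only \emph{two} transpositions: if the edges attaching a cycle to the previous and next loop are fully ramified, the cycle can consist of two edges of expansion factors $i$ and $d-i$ lying over a single edge of $\gT$, with one transposition at each of its two vertices; every local Riemann--Hurwitz condition is satisfied (this is exactly the configuration of Figure~\ref{fig:2transpgenus}). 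Likewise, ``closing $g$ loops costs $3g$ units of ramification'' is not a Riemann--Hurwitz identity (a standalone genus-one, degree-$2$ cover costs $4$). So the inequality cannot come out of a ramification count; the whole content of the claim is the exclusion of the cheap configurations, which your proposal defers.

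The exclusion is what the paper actually does and what you only gesture at: for a two-transposition cycle as above, the two edge lengths $x,y$ satisfy $ix=(d-i)y$, so the two edges of the corresponding loop of $\overline{\Gamma}$ stabilize to lengths in a fixed ratio; since $p$ lies in the interior of a maximal cone, those two lengths must deform independently (and one is chosen incomparably longer than the other), so no such cover lies over $p$ --- equivalently the cone of such covers has image of positive codimension, i.e.\ the corresponding rows of $M_{\sigma_\Theta}$ are proportional and the local degree vanishes. Your ``degenerate block'' remark is this argument in different words, but it is asserted rather than derived, and the offending configuration that must be killed is never identified. Two smaller points: the first loop needs a separate (purely combinatorial) treatment, since in $\overline{\Gamma}$ it is a single closed edge and the independence argument does not apply to it; there the bound of $3$ comes from Riemann--Hurwitz together with the fact that every end of $\widetilde{T}$ carries only a simple transposition, forcing local degree $2$ over the leftmost edge, as in the paper's proof. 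And the additivity of the count across loops (your ``disjoint triples'') is part of what the chain structure buys and should be said, not assumed.
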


\begin{proof}
    By the Riemann-Hurwitz formula, the smallest number of transpositions to make a graph of genus one is achieved when the degree is equal to $2$; then $4$ transpositions are required. We may use one to attach the subsequent loops,  so we need at least $3$ transpositions to form the first loop.
    After that, we may assume that we have two edges of arbitrary ramification which attach one to the previous, the other to the following loop. Again by the Riemann-Hurwitz formula, if both ends have full ramification, then one can make a cycle with two transpositions. But then the two lengths of such cycle are not independent, see Figure \ref{fig:2transpgenus}.
    Therefore, each loop needs to cover 2 edges of $\gT$, and must use at least 3 transpositions. All together $\gGamma \to \gT$ requires at least $3g$ transpositions. 
\end{proof}
%The first genus uses 3 transpositions. Following that, every other genus needs to have independent lengths of cycle due to the conditions on $x_i$'s. 

\begin{figure}[tb]
    \centering
    \tikzset{every picture/.style={line width=0.75pt}} %set default line width to 0.75pt        

\begin{tikzpicture}[x=0.75pt,y=0.75pt,yscale=-1,xscale=1]
%uncomment if require: \path (0,266); %set diagram left start at 0, and has height of 266

%Shape: Ellipse [id:dp3162013533302548] 
\draw   (64,74) .. controls (64,62.95) and (79.67,54) .. (99,54) .. controls (118.33,54) and (134,62.95) .. (134,74) .. controls (134,85.05) and (118.33,94) .. (99,94) .. controls (79.67,94) and (64,85.05) .. (64,74) -- cycle ;
%Straight Lines [id:da4381941648269496] 
\draw [line width=1.5]    (64,74) -- (39.67,74) ;
%Straight Lines [id:da6635051411658788] 
\draw    (64,74) -- (65,96) ;
%Straight Lines [id:da4005289969490675] 
\draw    (134,74) -- (133.67,96) ;
%Straight Lines [id:da37980111614281853] 
\draw [line width=1.5]    (134,74) -- (159.67,74) ;
%Straight Lines [id:da5846987873605245] 
\draw    (39.67,152) -- (159.67,151.75) ;
%Straight Lines [id:da12744495247287269] 
\draw    (67.5,151.75) -- (67.5,174.75) ;
%Straight Lines [id:da48331471418923755] 
\draw    (134.5,151.75) -- (134.5,174.75) ;
%Straight Lines [id:da5859630766539139] 
\draw    (100.5,116.25) -- (100.5,134.25) ;
\draw [shift={(100.5,136.25)}, rotate = 270] [color={rgb, 255:red, 0; green, 0; blue, 0 }  ][line width=0.75]    (10.93,-3.29) .. controls (6.95,-1.4) and (3.31,-0.3) .. (0,0) .. controls (3.31,0.3) and (6.95,1.4) .. (10.93,3.29)   ;

% Text Node
\draw (45,58.73) node [anchor=north west][inner sep=0.75pt]  [font=\scriptsize]  {$d$};
% Text Node
\draw (62.67,100.4) node [anchor=north west][inner sep=0.75pt]  [font=\scriptsize]  {$2$};
% Text Node
\draw (130.67,100.4) node [anchor=north west][inner sep=0.75pt]  [font=\scriptsize]  {$2$};
% Text Node
\draw (140.67,59.4) node [anchor=north west][inner sep=0.75pt]  [font=\scriptsize]  {$d$};
% Text Node
\draw (91.33,39.73) node [anchor=north west][inner sep=0.75pt]  [font=\scriptsize]  {$i $};
% Text Node
\draw (92,78.4) node [anchor=north west][inner sep=0.75pt]  [font=\scriptsize]  {$d-i $};

\end{tikzpicture}
    \caption{A picture of a genus formed using 2 transpositions. The thickened edges show where the previous and next loops would be attached. When $d=2$  the picture shows the degree $2$ loop, in this case only the thickened edge on the right is connected to other loops. The two lengths $x,y$ of the edges of the loop cover the same bounded edge in $\gT$, therefore they are not independent: they satisfy the relation $i x = (d-i) y$.}
    \label{fig:2transpgenus}
\end{figure}
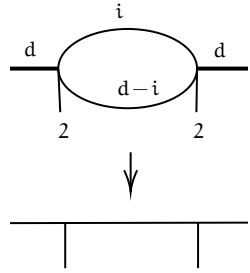

\begin{claim}
    The marked tree part of the graph needs at least $g$ transpositions.
\end{claim}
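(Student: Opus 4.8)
The plan is to argue in parallel with the proof of the previous Claim: exhibit $g$ features that the marked tree part is forced to carry, show that each costs at least one transposition, and show that the transpositions so obtained are pairwise distinct. The features are the $g$ compact edges of $\overline{T}$, of lengths $L_1,\dots,L_g$. Because $p$ was chosen with $L_i\gg x_k$ for all $i$ and $k$, the edge (or short chain of edges) of $T$ lying over the $i$-th compact edge of $\overline{T}$ has length of order $L_i$, hence so does at least one of its preimage edges $e_i$ in $\Gamma$, whose length is that of its image divided by its expansion factor and so is at least $L_i/d$. Such an edge cannot lie in the genus part $\gGamma$, all of whose compact edges have one of the small lengths $x_k$, so $e_i$ lies in the marked tree part.

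The second step is to see that $e_i$ is contracted by $\Fs$: this map only forgets ends and stabilizes, so it never shortens an edge, and since every compact edge of $\overline{\Gamma}$ has length at most $\max_k x_k\ll L_i$, the long edge $e_i$ must be destroyed in the stabilization. A compact edge disappears under stabilization only when, after the relevant ends are forgotten, one of its endpoints becomes a genus-zero vertex of valence one; for (the maximal long chain through) $e_i$ this forces one endpoint, say $v_i$, to have all of its remaining incident directions built out of forgotten structure, i.e.\ out of preimages of simple branch ends of $T$. In particular the ``bush'' of $\Gamma$ hanging off $v_i$ contains at least one transposition, and, being attached to the long chain, that transposition lies in the marked tree part.

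For the third step I would verify that distinct $e_i$'s yield distinct transpositions. If two of them shared such an endpoint $v$, then $\Fs$ would make $v$ two-valent with a long edge on either side, and the two would merge into a long edge of $\overline{\Gamma}$, which is impossible. By the Lemma just established, the image of the active path $AP(\Gamma)$ has total length less than any $L_i$, so no $e_i$ lies on the active path, and the $\phi(e_i)$ genuinely occupy the $g$ pairwise-disjoint compact edges of $\overline{T}$; the bushes at the $v_i$ are therefore disjoint and supply $g$ distinct simple branch ends, all in the marked tree part, which proves the Claim.

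The point that will take the most care is the first one: showing that the compact edge of $\overline{T}$ of length $L_i$ is realized in $T$ by a chain short enough that one of its edges is still of order $L_i$, rather than being finely subdivided by intervening marked or branch vertices --- this uses that $p$ sits in the interior of a maximal cone of (the refinement of) $\Mgn{g,n}^\trop\times\Mgn{0,n}^\trop$, so $L_i$ cannot be split among several freely-varying edges, together with the fact that a marked vertex of $\overline{T}$ is trivalent and so cannot on its own absorb a long edge. A cleaner but more computational alternative to the whole argument is a Riemann--Hurwitz count along the active path: if the active edge leaves $\gGamma$ with expansion factor $d-2i$ and the active path performs $J$ joins and $C$ cuts before the active edge reaches expansion factor $2$ (beyond which marks $1$ and $2$ sit on its two sheets, the path ending at a simple branch vertex), then $C-J=d-2i-2$ and the active path meets $C+J+1=g+2(J-i)$ simple branch ends; one is then left to show $J\ge i$, i.e.\ that the $2i$ surplus expansion-one sheets leaving $\gGamma$ must be reabsorbed by joins before the $n=g+3$ marks --- too few to be expended on sheets running off to unramified ends without rejoining --- can all be distributed on a single tree.
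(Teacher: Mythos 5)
Your step (2) is where the argument breaks. You claim that because each long chain of $\Gamma$ lying over an $L_i$-portion of $T$ must disappear under $\Fs$, some endpoint of it is left with only forgotten structure, that this forgotten structure consists of preimages of simple branch ends, and hence that the adjacent ``bush'' contains a transposition. None of these implications is sound. First, forgotten structure is not the same as branching: the unmarked preimages of the $n$ marked ends of $T$ are also forgotten, and even a preimage of a simple branch end is a transposition only when it is the expansion-two end; a bush consisting entirely of expansion-one edges and ends (a degree-one dead end, which the paper explicitly allows in Section \ref{sec:exclude}) contains no transposition whatsoever. Second, a long edge need not disappear by pruning an all-forgotten bush at all: if its far endpoint carries one marked leg and otherwise only forgotten ends, that vertex is unstable after forgetting and the mark simply slides down, contracting the edge — so your dichotomy ``an edge disappears only when an endpoint becomes one-valent of genus zero'' already misses this case. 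Both failure modes occur in the paper's own solutions: in the genus-one cover $\phi_2:\Gamma_2\to T_2$ of Figure \ref{fig:genus1}, the long edge of $T_2$ realizing $L_1$ is the vertical edge of the attached tripod, and both of its preimages in $\Gamma_2$ are expansion-one edges whose far vertex carries the mark $1$ (resp.\ $2$) together with one unmarked end over the other marked leg of the base tripod; there is no transposition in either bush, so your mechanism assigns no transposition to $L_1$ for this cover (its unique marked-tree transposition sits instead at the foot of these edges, on the active path). The same happens for every long edge inside a degree-one copy of a marked fragment in the general solutions of Section \ref{sec:solutions}, so the per-$L_i$ count cannot be completed as proposed.

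For comparison, the paper locates the $g$ transpositions differently: since every $L_i$ is incomparably longer than the image of the active path, each of the $g-1$ marks $3,\dots,n-2$ must reach the active path alone from a side branch, i.e.\ must live on a sheet that separates from the active path, and each separation is a cut or join, hence a transposition; one further transposition is forced because the branch carrying the marks $1,2$ must attach to an edge of expansion at least $2$ (otherwise $L_1$ and $x_{4g}$ would be linearly related). Your closing ``Riemann--Hurwitz along the active path'' sketch is much closer in spirit to this, but it leaves precisely the needed content unproved (that each of these marks requires its own sheet, and your inequality $J\ge i$), so as it stands the proposal has a genuine gap rather than an alternative proof.
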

\begin{proof}
On the marked tree part of $\Gamma \to T$, there are requirements on the lengths associated with the marked fragments. There are $g$ lengths $L_i$'s, each of which is much longer than any of the lengths on the active path; %thus each marked fragment needs $g$ free lengths. Recall that free lengths come from a 
thus all marked points  from $3$ to $n-2$ ($g-1$ of them) must be stabilizing to the active path from further down an edge that it is the only mark on. Every time one such edge is formed, at least one transposition must occur. Finally, the branch that the marks $1,2$ lie one must attach to an edge of degree at least $2$, else we would have a relation between the lengths $L_1$ and $x_4g$. All together, at least $g$ transpositions are needed for the marked tree part of $\Gamma \to T$.     
\end{proof}

Since the total number of transpositions is $4g$, the number of transpositions on $\gGamma \to \gT$ is exactly $3g$ and the number on the rest of $\Gamma \to T$ is $g$.

\subsubsection*{Dead ends}
We call {\it dead ends} any part of the graph $\Gamma$ that gets stabilized away in $\overline{\Gamma}$. Because of the fact that we must be using every transposition to obtain some edge length for either $\overline{\Gamma}$ or $\overline{T}$ dead ends can arise in only two ways: they can be individual ends of degree two, or possibly more complicated subgraphs entirely of degree one.

\subsubsection*{Genus part}
%\rcolor{Use classification of "how to obtain genus with $3$ transpositions" that you did, and show that to have independent lengths only the two moves above survive.}

\begin{figure}[tb]
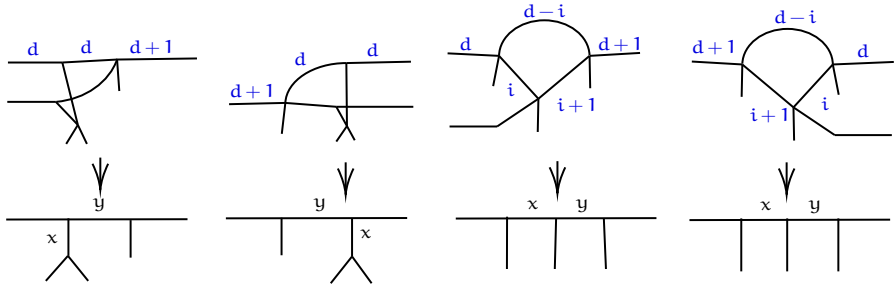

    \centering
    \include{Figures/addinggenus}
    \caption{The four possible ways to form a genus with 3 transpositions. To avoid cluttering the picture we omit many edges and ends of degree one that would be necessary to draw the complete covers. Also note that the horizontal dead ends are unlabeled because they have degree one.}
    \label{fig:addinggenus}
\end{figure}

Knowing that any new loop is added with exactly three transpositions, we look at all possible ways to add a new genus to $\Gamma$ by studying covers of a tree with five ends, two of which may have arbitrary branching data. Due to the graph $\overline{\Gamma}$ needing one much longer edge on each loop, we need to be covering two distinct edges of the base, so no  loops formed by pairs of edges between two vertices work. Distinguishing between an incoming and an outgoing end, which connect to the previous loop and the following one, there are four possible covers forming a loop covering two edges, as shown in Figure \ref{fig:addinggenus}. %shows all possible covers with loops covering two edges.
In $\overline{\Gamma}$, we require the 2 lengths of the edges forming each loop to be independent of each other, as one is required to be much longer than the other. Looking at the first picture from the left in Figure \ref{fig:addinggenus}, the loop after stabilization has one edge formed by the horizontal end of degree $d$, the other by the two diagonal ends of degree one together with the curvy edge of degree one. Let us say that the first edge has length $x_t$, the second one $x_b$. With respect to the lengths $x,y$ on the base curve, we have:
\[
x_t = \frac{y}{d}, \ \ \ \ \ 
x_b =  2x+y.
\]
One can see that by choosing $y$ much smaller than $x$ one can make the ratio of the two lengths arbitrarily large.
The situation is identical for the second picture. 

On the other hand, for the third picture let us call $x_t$ the length of the top edge of degree $d-i$ and $x_b$ the length of the edge in the stabilized curved formed by the two edges of degree $i$ and $i+1$. We have
\begin{equation}
\label{eq:leng}    
x_t = \frac{x+y}{d-i}, \ \ \ \ \ x_b = \frac{x}{i}+ \frac{y}{i+1}.
\end{equation}

One can see that \eqref{eq:leng} implies that
\[
\frac{d-i}{d}x_t\leq x_b\leq (d-i) x_t,
\]
showing that the ratio $x_b/x_t$ is bounded both above and below by constants. It is therefore not possible to find an inverse image of the chosen point $p$ that has any loop of this shape. The situation is similar for the fourth picture. In conclusion, every loop must be formed by a fragment of type $D, U$ as described in Section \ref{sec:genuspart}.

%the $x$ and $y$ are labeling the lengths of those edges. We see that the length of the loop on the top, $x_t$, is equal to $dy$, and the length on the bottom, $x_b$, is equal to $2x+y$. Since $x$ is only part of $x_b$, $x_t$ and $x_b$ are independent, and we can make $x_b$ large and $x_t$ small. The same argument works for the next genus in from the left. For the genus on the right, $x_t=x+y$ and $x_b=(d-1)x+dy$. Solving for $x$ and $y$ gives $x=dx_t-x_b$ and $y=-(d-1)x_t+x_b$. Due to $x$ and $y$ needing to be positive, there is no way to make one of $x_t$ or $x_b$ large and one small because the opposite signs on them in the equations for $x$ and $y$. Therefore, the last way to form a genus does not work. A similar argument can be made for the second from the right loop. Therefore, the two ways to make genus on the left are the only possible ways. 

\subsubsection*{Tree part}
We now focus our attention on the part of the graph $\Gamma$ that supports the marked points. We have seen so far that after the last loop there is an active path obtained via a sequence of cuts and joins of the active path with edges of degree one. The marks must stabilize, in reverse order,  to each of the trivalent vertices on the active path to obtain $\overline{\Gamma}$. The marked points must lie on inverse images of degree one of some of the fragments $F_j$ described in Section \ref{sec:markings}.

%To determine the possible shapes of the section of the cover that contains the marked points, we look at the different ways to organize the cuts and joins along the active path. We know that all edges joining and coming from cuts are degree 1 because there are not extra transpositions for two edges of degree one to join in $\gGamma$ and once an edge cuts, it can not rejoin or it would make a loop. 
Every time a cut occurs, the degree of the active path decreases by 1, and every time a join occurs, the degree of the active path increases by 1. %In order to place the marked points in a way that maps to $p$ in $\Mgn{g,n}^\trop \times \Mgn{0,n}^\trop$, the points on the first tree of marked points must stabilize in reverse numerical order, and thus the cuts or joins they are marked on must occur in that order. 
Since joins come from the left and cuts go to the right, in order for both to be covering the same tree of marked points the cut must be to the left of the join. This fact must be true for all cuts and joins covering the first (meaning leftmost) tree in the fragment $F_j$, therefore all cuts must come before all joins that are covering the first tree. After all of the joins, the graph has as many cuts as needed to decrease the degree of the active path to one. Since the fragments $F_j$ have at most two interesting connected components (interesting here means they are not just a single marked end), the second interesting connected component must be stabilizing to only cuts - since any join, occurring to the right of the cuts, would leave an active path of degree greater than one.

%Returning to looking at the section of our cover that is genus zero, we now know that all edges joining the active path are degree 1, and for every edge joining, we have lost a usable edge. Therefore our active path at the start of the genus zero section of the cover has degree $d-2i$ for a degree $d$ cover with $i$ edges joining.% Combining this with the fact that we have shown that all joins are next to each other, 
We conclude that all covers have a marked tree part that looks like some number of cuts followed by all joins and ending with the rest of the cuts.
But then all possible solutions to our problem must be of the form of those we have exhibited, and there can be no more solutions.
Thus we have concluded the proof of Theorem \ref{thm:ttev}, and established via a direct tropical computation that $\Tev_g = 2^g$.

% We aim to find preimages when all $L_i's\gg0$. In order to make $L_i's$ long without making $x_j's\gg0$, a corresponding length to $L_1$ in the base curve must be lost in the cover when stabilized, so that length is free to make long. Given a part of a cover where the edge is degree 2, we can put two marked points on a tree in the base and then put one marked point on each of the two trees covering the tree in the base as shown in \ref{fig:deg2long}. When stabilized, the cover curve no longer has the length $l$, so $l$ is free to make as long as needed. In order to add another free length and another marked point we need the marked point to fall alone because of the shape of our point in $M_{g,n}^{\trop}$ therefore it needs to be on its own branch of the cover curve to stabilize to active path. Therefore, if we have an edge of degree 2, we need $k-1$ branches coming from cuts for a tree with $k$ marked points. This occurs at the right end of the cover. Similarly, when a branch is joining the active path, a single marked point can be placed on the branch so that it stabilizes to the active edge. When there are cuts and joins creating branches of our graph, we need $l$ branches covering a tree with $l$ marked points.

\bibliographystyle{alpha}
\bibliography{lib}

\begin{thebibliography}{CGM22}

\bibitem[ACP15]{ACP}
Dan Abramovich, Lucia Caporaso, and Sam Payne.
\newblock The tropicalization of the moduli space of curves.
\newblock {\em Ann. Sci. Ec. Norm. Super.}, 48(4):765--809, 2015.

\bibitem[BP21]{B}
Anders Buch and Rahul Pandharipande.
\newblock {Tevelev degrees in Gromov-Witten theory}.
\newblock Preprint:arXiv:2112.14824, 2021.

\bibitem[Cap14]{Caporaso}
Lucia Caporaso.
\newblock Gonality of algebraic curves and graphs.
\newblock In {\em Algebraic and complex geometry}, volume~71 of {\em Springer Proc. Math. Stat.}, pages 77--108. Springer, Cham, 2014.

\bibitem[CG24]{CG:tpsi}
Renzo Cavalieri and Andreas Gross.
\newblock Tropicalization of psi classes.
\newblock 2024.

\bibitem[CGM22]{psi-classes}
Renzo Cavalieri, Andreas Gross, and Hannah Markwig.
\newblock Tropical {{\(\psi\)}} classes.
\newblock {\em Geom. Topol.}, 26(8):3421--3524, 2022.

\bibitem[CIL24]{D}
Alessio Cela and Aitor Iribar~Lopez.
\newblock Genus 0 logarithmic and tropical fixed-domain counts for {H}irzebruch surfaces.
\newblock {\em J. Lond. Math. Soc. (2)}, 109(4):Paper No. e12892, 28, 2024.

\bibitem[CL23a]{E}
Alessio Cela and Carl Lian.
\newblock {Fixed-domain curve counts for blow-ups of projective space}.
\newblock Preprint:arXiv:230303433, 2023.

\bibitem[CL23b]{C}
Alessio Cela and Carl Lian.
\newblock Generalized {T}evelev degrees of {$\Bbb P^1$}.
\newblock {\em J. Pure Appl. Algebra}, 227(7):Paper No. 107324, 30, 2023.

\bibitem[CMR16]{CMRadmissible}
Renzo Cavalieri, Hannah Markwig, and Dhruv Ranganathan.
\newblock Tropicalizing the space of admissible covers.
\newblock {\em Math. Ann.}, 364(3-4):1275--1313, 2016.

\bibitem[CMR23]{F}
Renzo Cavalieri, Hannah Markwig, and Dhruv Ranganathan.
\newblock {\em Tropical and logarithmic methods in enumerative geometry}, volume~52 of {\em Oberwolfach Seminars}.
\newblock Birkh\"auser/Springer, Cham, [2023] \copyright2023.

\bibitem[CPS22]{CPS:Tevdeg}
A.~Cela, R.~Pandharipande, and J.~Schmitt.
\newblock Tevelev degrees and {H}urwitz moduli spaces.
\newblock {\em Math. Proc. Cambridge Philos. Soc.}, 173(3):479--510, 2022.

\bibitem[FL23]{A}
Gavril Farkas and Carl Lian.
\newblock Linear series on general curves with prescribed incidence conditions.
\newblock {\em J. Inst. Math. Jussieu}, 22(6):2857--2877, 2023.

\bibitem[HM82]{HM:ac}
Joe Harris and David Mumford.
\newblock On the {K}odaira dimension of the moduli space of curves.
\newblock {\em Invent. Math.}, 67(1):23--88, 1982.
\newblock With an appendix by William Fulton.

\bibitem[HM98]{HM:moc}
Joe Harris and Ian Morrison.
\newblock {\em Moduli of curves}, volume 187 of {\em Graduate Texts in Mathematics}.
\newblock Springer-Verlag, New York, 1998.

\bibitem[KV07]{KV:IQC}
Joachim Kock and Israel Vainsencher.
\newblock {\em An invitation to quantum cohomology}, volume 249 of {\em Progress in Mathematics}.
\newblock Birkh\"{a}user Boston, Inc., Boston, MA, 2007.
\newblock Kontsevich's formula for rational plane curves.

\bibitem[Mar20]{m:survey}
Hannah Markwig.
\newblock Tropical curves and covers and their moduli spaces.
\newblock {\em Jahresber. Dtsch. Math.-Ver.}, 122(3):139--166, 2020.

\bibitem[Mik07]{MikhalkinModuli}
Grigory Mikhalkin.
\newblock Moduli spaces of rational tropical curves.
\newblock In {\em Proceedings of {G}\"okova {G}eometry-{T}opology {C}onference 2006}, pages 39--51. G\"okova Geometry/Topology Conference (GGT), G\"okova, 2007.

\bibitem[MS15]{macstu:tropgeom}
Diane Maclagan and Bernd Sturmfels.
\newblock {\em Introduction to tropical geometry}, volume 161 of {\em Graduate Studies in Mathematics}.
\newblock American Mathematical Society, Providence, RI, 2015.

\bibitem[Rab12]{Rab}
J.~Rabinoff.
\newblock Tropical analytic geometry, {N}ewton polygons, and tropical intersections.
\newblock {\em Adv. Math.}, 229(6):3192--3255, 2012.

\bibitem[Tev20]{Tevelev:2020zux}
Jenia Tevelev.
\newblock {Scattering amplitudes of stable curves}.
\newblock Preprint:arXiv:2007.03831, 2020.

\end{thebibliography}

\end{document}